\newtheorem{theorem}{Theorem}[section]
\newtheorem{lemma}{Lemma}[section]
\newtheorem{proposition}{Proposition}[section]
\newtheorem{assumption}{Assumption}[section]
\newenvironment{proof}[1][Proof]{\textbf{#1.} }{\ \rule{0.5em}{0.5em} \vspace{1ex}}
\def\real{\mathbb{R}}
\DeclareMathOperator{\sign}{sgn}
\DeclareMathOperator{\cc}{c}
\DeclareMathOperator{\NT}{N}
\newcommand{\diag}{\operatorname{diag}}
\begin{document}

\title{A Line-Search Algorithm Inspired by the Adaptive Cubic Regularization Framework and Complexity Analysis}
\author{
E. Bergou \thanks{MaIAGE, INRA, Universit\'e Paris-Saclay, 78350 Jouy-en-Josas, France
 ({\tt elhoucine.bergou@inra.fr}).
}
\and
Y. Diouane\thanks{Institut Sup\'erieur de l'A\'eronautique et de l'Espace (ISAE-SUPAERO), Universit\'e de Toulouse, 31055 Toulouse Cedex 4, France
 ({\tt youssef.diouane@isae.fr}).
}
\and
S. Gratton\thanks{INP-ENSEEIHT, Universit\'e de Toulouse,
31071 Toulouse Cedex 7, France ({\tt serge.gratton@enseeiht.fr}).}
}
\maketitle
\footnotesep=0.4cm
\begin{abstract}

Adaptive regularized framework using cubics has emerged as an alternative to line-search and trust-region algorithms for smooth nonconvex optimization, with an optimal complexity amongst second-order methods. In this paper, we propose and analyze the use of an iteration dependent scaled norm in the adaptive regularized framework using cubics. Within such scaled norm, the obtained method behaves as a line-search algorithm along the quasi-Newton direction with a special backtracking strategy. Under appropriate assumptions, the new algorithm enjoys  the same convergence and complexity properties as adaptive regularized algorithm using cubics. The complexity for finding an approximate first-order stationary point can be improved to be optimal whenever a second order version of the proposed  algorithm is regarded. In a similar way, using the same scaled norm to define the trust-region neighborhood, we show that the trust-region algorithm behaves as a line-search algorithm. The good potential of the obtained algorithms is shown on a set of large scale optimization problems.

\end{abstract}

\bigskip

\begin{center}
\textbf{Keywords:}
Nonlinear optimization, unconstrained optimization, line-search methods,  adaptive regularized framework using cubics, trust-region methods, worst-case complexity.
\end{center}
\section{Introduction}
An unconstrained nonlinear optimization problem considers the minimization of a scalar function known as the objective function. Classical iterative methods for solving the previous problem are trust-region (TR) \cite{Conn_Gould_Toin_2000,YYuan_2015}, line-search (LS)  \cite{JEDennis_RBSchnabel_1983} and algorithms using cubic regularization. The latter class of algorithms has been first investigated by Griewank \cite{AGriewank_1981} and then by Nesterov and Polyak \cite{YNesterov_BTPolyak_2006}. Recently, Cartis \textit{et al} \cite{CCartis_NIMGould_PhLToint_2011_a} proposed a generalization to an adaptive regularized framework using cubics (ARC). 

The worst-case evaluation complexity of finding an $\epsilon$-approximate first-order critical point using TR or LS methods is shown to be computed in at most 
  $\mathcal{O}(\epsilon^{-2})$ objective function or gradient evaluations, where $\epsilon \in ]0,1[$ is a user-defined accuracy threshold on the gradient norm \cite{YNesterov_2004,SGratton_ASartenaer_PhLToint_2008,CCartis_PhLSampaio_PhLToint_2015}. Under appropriate assumptions, ARC takes at most $\mathcal{O}(\epsilon^{-3/2})$ objective function or gradient evaluations to reduce the gradient of the objective function norm below $\epsilon$, and thus it is improving substantially the 
worst-case  complexity over the classical TR/LS methods~\cite{CCartis_NIMGould_PhLToint_2011_b}. Such complexity bound can be improved using higher order regularized models, we refer the reader for instance to the references \cite{Birgin2016,CCartis_NIMGould_PhLToint_2017}.

More recently, a non-standard TR method~\cite{CurtRobiSama16} is  proposed with the same worst-case complexity bound as ARC. 
It is proved also that the same worst-case complexity $\mathcal{O}(\epsilon^{-3/2})$  can be achieved by mean of a specific variable-norm in a TR method \cite{Martinez2017} or using quadratic regularization \cite{Birgin2017}. All previous approaches use a cubic sufficient descent condition instead of the more usual predicted-reduction based descent. Generally, they need to solve more than one linear system in sequence at each outer iteration (by outer iteration, we mean the sequence of the iterates generated by the algorithm), this makes the computational cost per iteration expensive. 

In \cite{Bergou_Diouane_Gratton_2017}, it has been shown how to use the so-called energy norm in the ARC/TR framework when a symmetric positive definite (SPD) approximation of the objective function Hessian is available. 
Within the energy norm,  ARC/TR methods behave as LS algorithms along the Newton direction, with a special backtracking strategy and an acceptability condition in the spirit of ARC/TR methods. As far as the model of the objective function is convex, in \cite{Bergou_Diouane_Gratton_2017}  the proposed LS algorithm derived from ARC   enjoys the same convergence and complexity analysis properties as ARC, in particular the first-order complexity bound of $\mathcal{O}(\epsilon^{-3/2})$. In the complexity analysis  of ARC method~\cite{CCartis_NIMGould_PhLToint_2011_b}, it is required that the Hessian  approximation has to approximate accurately enough the true Hessian \cite[Assumption AM.4]{CCartis_NIMGould_PhLToint_2011_b}, obtaining such convex approximation may be out of reach when handling nonconvex optimization. This paper generalizes the proposed methodology in \cite{Bergou_Diouane_Gratton_2017}  to handle nonconvex models. We propose to use, in the regularization term of the ARC cubic model, an iteration dependent scaled norm. In this case, ARC behaves as an LS algorithm with a worst-case evaluation complexity of finding an $\epsilon$-approximate first-order critical point of $\mathcal{O}(\epsilon^{-2})$ function or gradient evaluations. Moreover, under appropriate assumptions, a second order version of the obtained LS algorithm is shown to have a worst-case complexity of $\mathcal{O}(\epsilon^{-3/2})$.

The use of a scaled norm was first introduced in \cite[Section 7.7.1]{Conn_Gould_Toin_2000} for TR methods where it was suggested to use the absolute-value
of the Hessian matrix in the scaled norm, such choice was described as ``the ideal trust region'' that reflects the proper scaling of the underlying problem. For a large scale indefinite Hessian matrix, computing its absolute-value is certainly a computationally expensive task as it requires a spectral decomposition. This means that for large scale optimization problems the use of the absolute-value based norm can be seen as out of reach. Our approach in this paper is different as it allows the use of subspace methods.

In fact, as far as the quasi-Newton direction is not orthogonal with the gradient of the objective function at the current iterate, the specific choice of the scaled norm renders the ARC subproblem solution collinear with the quasi-Newton direction. Using subspace methods, we also consider the large-scale setting when the matrix factorizations are not affordable, implying that only iterative methods for computing a trial step can be used.  
Compared to the classical ARC, when using the Euclidean norm, the dominant computational cost regardless the function evaluation cost of the resulting algorithm is mainly the cost of solving a linear system for successful iterations. Moreover, the cost of the subproblem solution for unsuccessful iterations is getting inexpensive and requires only an update of a scalar. Hence, ARC behaves as an LS algorithm along the quasi-Newton direction, with a special backtracking strategy and an acceptance criteria in the sprite of ARC algorithm. 

In this context, the obtained LS algorithm is globally convergent and  requires a number of iterations of order $\epsilon^{-2}$ to produce an $\epsilon$-approximate first-order critical point. A second order version of the algorithm is also proposed, by making use of the exact Hessian or at least of a good approximation of the exact Hessian, to ensure an optimal worst-case complexity bound of order $\epsilon^{-3/2}$. In this case, we investigate how the complexity bound depends on the quality of the chosen quasi-Newton direction in terms of being a sufficient descent direction. In fact, the obtained complexity bound can be worse than it seems to be whenever the quasi-Newton direction is approximately orthogonal with the gradient of the objective function. Similarly to ARC, we show that the TR method behaves also as an LS algorithm using the same scaled norm as in ARC. Numerical illustrations over a test set of large scale optimization problems are given in order to assess the efficiency of the obtained LS algorithms.

The proposed analysis in this paper assumes that the quasi-Newton direction is not orthogonal with the gradient of the objective function during the minimization process. 
When such assumption is violated, one can either modify the Hessian approximation using regularization techniques or, when a second order version of the LS algorithm is regarded, switch to the classical ARC algorithm using the Euclidean norm until this assumption holds. In the latter scenario, we propose to check first if there exists an approximate quasi-Newton direction, among all the iterates generated using a subspace method, which is not orthogonal with the gradient and that satisfies the desired properties. If not, one minimizes the model using the Euclidean norm until a new successful outer iteration is found. 

We organize this paper as follows. In Section~\ref{section:1}, we introduce the ARC method using a general scaled norm and derive the obtained LS algorithm on the base of ARC when a specific scaled norm is used. 
Section~\ref{section:2} analyses the minimization of the cubic model and discusses the choice of the scaled norm that simplifies solving the ARC subproblem. Section~\ref{section:3} discusses first how the iteration dependent  can be chosen uniformly equivalent to the Euclidean norm, and then we propose a second order LS algorithm that enjoys the optimal complexity bound. The section ends with a detailed complexity analysis of the obtained algorithm. Similarly to ARC and using the same scaled norm, an LS algorithm in the spirit of TR algorithm is proposed in Section~\ref{section:4}. Numerical tests are illustrated and discussed in Section~\ref{section:5}. Conclusions and future improvements are given in Section~\ref{section:6}.

\section{ARC Framework Using a Specific $M_k$-Norm }
\label{section:1}
\subsection{ARC Framework}
 We consider a problem of unconstrained minimization of the form
\begin{eqnarray}
\label{nl_ls_problem}
\displaystyle \min_{x \in \real^n} f(x), 
\end{eqnarray}
 where the objective function $f: \real^n \rightarrow \mathbb{R}$ is assumed to be continuously differentiable. The ARC framework \cite{CCartis_NIMGould_PhLToint_2011_a} can be described as follows: at a given iterate $x_k$, we define $m^Q_k : \real^n \rightarrow \real$  as an approximate second-order Taylor approximation of the objective function $f$ around $x_k$, i.e.,
 \begin{eqnarray}
\label{q_model}
\displaystyle m^Q_k(s) & =&  f(x_k) + s^{\top}g_k + \frac{1}{2} s^{\top} B_k s,
\end{eqnarray}
where $g_k = \nabla f(x_k)$ is the gradient of $f$ at the current iterate $x_k$, and $B_k$ is a symmetric  local approximation (uniformly bounded from above) of the Hessian of $f$ at $x_k$. 
The trial step $s_k$ approximates the  global minimizer of the cubic model $m_k(s)= m^Q_k(s) + \frac{1}{3}\sigma_k \| s \|^3_{M_k}$, i.e.,
 \begin{eqnarray}
\label{eq:nl_ARC_subproblem}
s_k \approx \displaystyle \arg \min_{s \in \real^n}  m_k(s),
\end{eqnarray}
where $\|.\|_{M_k}$ denotes an iteration dependent scaled norm  of the form $\|x\|_{M_k}= \sqrt{x^{\top}M_k x}$ for all $x \in \real^n$ and $M_k$ is a given SPD matrix. $\sigma_k >0$ is a dynamic positive parameter that might be regarded as the reciprocal
of the TR radius in TR algorithms (see \cite{CCartis_NIMGould_PhLToint_2011_a}). 
The parameter $\sigma_k$ is  taking into account the agreement between the objective function $f$ and the model $m_k$.  

To decide whether the trial step is acceptable or not a ratio between the actual reduction and the predicted reduction is computed, as follows:
\begin{equation} 
\label{rho}
\rho_k \; = \;  \frac{f(x_k) - f(x_k + s_k)}{ f(x_k) - m^Q_k(s_k)}.
\end{equation}
For a given scalar $0 < \eta < 1$, the $k^{th}$ outer iteration will be said \textit{successful} if $\rho_k \ge \eta$, and  \textit{unsuccessful} otherwise.  For all \textit{successful} iterations we set $x_{k+1}= x_k + s_k$; otherwise the current  iterate is kept unchanged $x_{k+1} = x_k$. 
We note that, unlike the original ARC \cite{CCartis_NIMGould_PhLToint_2011_a,CCartis_NIMGould_PhLToint_2011_b} where the cubic model is used to evaluate  the denominator in (\ref{rho}), in the nowadays works related to ARC, only the quadratic approximation $m^Q_k(s_k)$ is used in the comparison with the actual value of $f$ without the regularization parameter (see \cite{Birgin2016} for instance).
Algorithm~\ref{algo:ARC} gives a detailed description of ARC.

 \LinesNumberedHidden
\begin{algorithm}[!ht]
\SetAlgoNlRelativeSize{0}
\caption{\bf ARC algorithm.}
\label{algo:ARC}
\SetAlgoLined
\KwData{select an initial point $x_0$ and the constant $0<  \eta<1$. Set the initial regularization $\sigma_0 >0$ and $\sigma_{\min} \in ]0, \sigma_0]$, set also 
the constants  $0< \nu_1 \le 1 <\nu_2 $.}
\For{$k= 1,  2, \ldots$}{
Compute the step $s_k$ as an approximate solution of (\ref{eq:nl_ARC_subproblem}) such that 
 \begin{eqnarray}
\label{Cauchy_decrease}
m_k(s_k) &\le & m_k(s^{\cc}_{k})
\end{eqnarray}
where $s^{\cc}_{k} = - \delta^{\cc}_{k} g_k$ and $\delta^{\cc}_{k} = \displaystyle \arg \min_{t > 0} m_k(-t g_k)$
\;
\eIf{$\rho_k \ge \eta$}{Set 
$x_{k+1}= x_k + s_k$ and $ \sigma_{k+1} = \max\{\nu_1 \sigma_k,\sigma_{\min}\}$\; }{Set $x_{k+1} = x_k$ and $\sigma_{k+1}= \nu_2\sigma_k$\;}
}
\end{algorithm}

The Cauchy step $s^{\cc}_{k}$, defined in Algorithm \ref{algo:ARC}, is computationally inexpensive compared to the computational cost of the global minimizer of $m_k$. The condition (\ref{Cauchy_decrease}) on $s_k$ is sufficient for ensuring global convergence of ARC to first-order critical points. 

From now on, we will assume that first-order stationarity is not reached yet, meaning that the gradient of the objective function is non null at the current iteration $k$ (i.e., $g_k \neq  0$). Also, $\| \cdot \|$ will denote the vector or matrix $\ell_2$-norm, $\sign(\alpha)$ the sign of a real $\alpha$, and $I_n$ the identity matrix of size $n$.

\subsection{An LS Algorithm Inspired by the ARC Framework}

Using a specific $M_k$-norm in the definition of the cubic model $m_k$, we will show that ARC framework (Algorithm \ref{algo:ARC}) behaves as an LS algorithm along the quasi-Newton direction. 
In a previous work \cite{Bergou_Diouane_Gratton_2017}, when the matrix $B_k$ is assumed to be positive definite, we showed that the minimizer  $s_k$ of the cubic model defined in (\ref{eq:nl_ARC_subproblem}) is getting collinear with the quasi-Newton direction when the matrix $M_k$ is set to be equal to $B_k$.
In this section we generalize our proposed approach  to cover the case where the linear system $B_ks = -g_k$  admits an approximate solution and $B_k$ is not necessarily SPD.  

Let $s^Q_k$ be an approximate solution of the linear system $B_k s=-g_k$ and assume that such step $s_k^Q$ is not orthogonal with the gradient of the objective function at $x_k$, i.e., there exists an $\epsilon_d>0$ such that $|g_k^{\top}s_k^Q | \ge \epsilon_d \|g_k \| \| s_k^Q\| $. Suppose that there exists an SPD matrix $M_k$ such that $M_ks^Q_k= \frac{\beta_k \|s^Q_k\|^2}{g_k^{\top}s_k^Q}g_k$ where $\beta_k \in ]\beta_{\min}, \beta_{\max}[$ and $\beta_{\max}> \beta_{\min}>0$, in Theorem \ref{th:equivnm}  we will show that such matrix $M_k$ exists. By using the associated $M_k$-norm in the definition of the cubic model $m_k$, one can show (see Theorem  \ref{cor:1}) that an approximate stationary  point of the subproblem (\ref{eq:nl_ARC_subproblem})  is of the form 
\begin{eqnarray}
 \label{eqdeltaarc}
s_{k} & = & \delta_k s^Q_k, ~~\mbox{where}~~  \delta_k=  \frac{2 }{1-\sign(g_k^{\top}s^Q_k) \sqrt{1 +4 \frac{\sigma_k \| s^Q_k \|^3_{M_k}}{|g_k^{\top}s^Q_k|} }} .
\end{eqnarray}

For \textit{unsuccessful} iterations in Algorithm~\ref{algo:ARC}, since the step direction $s^Q_k$ does not change, the approximate solution of the subproblem,  given by (\ref{eqdeltaarc}), can be obtained only by updating the step-size $\delta_k$.  This means that the subproblem computational cost of \textit{unsuccessful} iterations is getting straightforward compared to solving the subproblem as required by ARC when the Euclidean norm is used (see e.g., \cite{CCartis_NIMGould_PhLToint_2011_a}).  As a consequence, the use of the proposed  $M_k$-norm in Algorithm~\ref{algo:ARC} will lead to a new formulation of ARC algorithm where the dominant computational cost, regardless the objective function evaluation cost, is  the cost of solving a linear system for \textit{successful} iterations.  In other words, with the proposed $M_k$-norm, the ARC algorithm behaves as an LS method with a specific backtracking strategy and an acceptance criteria in the sprite of ARC algorithm, i.e., the step is of the form $s_{k}= \delta_k s_k^Q  $  where  the step length $\delta_k>0$ is chosen such as
\begin{eqnarray} \label{sdcond:arc}
\rho_k=\frac{f(x_k) - f(x_k +s_{k} )}{ f(x_k) - m_k^Q(s_{k})} \ge  \eta & \mbox{and}& m_k(s_{k}) \le m_k(-\delta^{\cc}_k g_k).
\end{eqnarray}
The step lengths $\delta_k$ and $\delta^{\cc}_k$ are computed respectively as follows:
\begin{eqnarray} 
\delta_k&= &\frac{2}{1-\sign(g_k^{\top}s_k^Q)\sqrt{1 + 4 \frac{\sigma_k \beta_k^{3/2} \| s^Q_k \|^3}{|g_k^{\top}s_k^Q|}}} \label{eq:deltasigma:1},
\end{eqnarray} 
and
\begin{eqnarray} 
 \delta^{\cc}_k&= &\frac{2}{\frac{g_k^{\top}B_kg_k}{\|g_k \|^2} +\sqrt{\left(\frac{g_k^{\top}B_kg_k}{\|g_k \|^2}\right)^2 + 4 \sigma_k  \chi_k^{3/2} \| g_k \| }} \label{eq:deltasigma:2}
\end{eqnarray} 
where $\chi_k= \beta_k \left(\frac{5}{2}  - \frac{3}{2} \cos(\varpi_k)^2 + 2 \left(\frac{1- \cos(\varpi_k)^2}{\cos(\varpi_k)}\right)^2\right)$ and $\cos(\varpi_k) = \frac{g_k^{\top}s_k^Q}{\|g_k\| \| s_k^Q\|}$. 
The $M_k$-norms of the vectors $s^Q_k$ and $g_k$ in the computation of $ \delta_k$ and $ \delta^{\cc}_k$ have been substituted using the expressions given in Theorem \ref{th:equivnm}. 
The value of $\sigma_k$ is set equal to the current value of the regularization parameter as in the original
ARC algorithm. For large values of $\delta_k$ the  decrease condition (\ref{sdcond:arc}) may not be satisfied. In this case, the value of $\sigma_k$ is enlarged using an expansion factor $\nu_2>1$. Iteratively, the value of $\delta_k$ is updated and the acceptance condition (\ref{sdcond:arc}) is checked again, until its satisfaction.

We referred the ARC algorithm, when the proposed scaled $M_k$-norm is used, by LS-ARC as it behaves as an LS type method in this case.
 Algorithm~\ref{algo:LS-ARC} details the final algorithm.  We recall again that this algorithm is nothing but ARC algorithm using a specific $M_k$-norm. 
 \LinesNumberedHidden
\begin{algorithm}[!ht]
\SetAlgoNlRelativeSize{0}
\caption{\bf LS-ARC algorithm.}
\label{algo:LS-ARC}
\SetAlgoLined
\KwData{select an initial point $x_0$ and the constants $0<  \eta<1$, $0<\epsilon_d<1$, $0< \nu_1 \le 1 <\nu_2 $ and $0< \beta_{\min}< \beta_{\max}$. Set $\sigma_0 >0$ and $\sigma_{\min} \in ]0, \sigma_0]$.}
\For{$k= 1,  2, \ldots$}{
Choose a parameter $\beta_k \in ]\beta_{\min}, \beta_{\max}[$\; 
Let $s^Q_k$ be an approximate solution of $B_k s=-g_k$ such as $|g_k^\top s^Q_k| \ge  \epsilon_d  \|g_k\|\|s^Q_k\|$ \;
Set  $\delta_k$ and $\delta^{\cc}_k$  respectively using (\ref{eq:deltasigma:1}) and (\ref{eq:deltasigma:2})\;
\While{ {\normalfont condition \text{(\ref{sdcond:arc})} is not satisfied}}{
Set $\sigma_k \leftarrow  \nu_2 \sigma_k $ and update $\delta_k$ and $\delta^{\cc}_k$  respectively using (\ref{eq:deltasigma:1}) and (\ref{eq:deltasigma:2})\;
    }
    
    Set $s_k= \delta_k s^Q_k$, $x_{k+1}= x_k + s_k$ and $ \sigma_{k+1} = \max \{\nu_1 \sigma_k,\sigma_{\min}\}$\;
}
\end{algorithm}

Note that in Algorithm \ref{algo:LS-ARC} the step $s^Q_k$ may not exist or be approximately orthogonal with the gradient $g_k$. 
A possible way to overcome this issue can be ensured by modifying the matrix $B_k$ using regularization techniques. In fact, as far as the Hessian approximation is still uniformly bounded from above, the global convergence will still hold as well as a complexity bound of order $\epsilon^{-2}$ to drive the norm of the gradient below $\epsilon \in ]0,1[$ (see \cite{CCartis_NIMGould_PhLToint_2011_a} for instance). 

The complexity bound can be improved to be of the order of $\epsilon^{-3/2}$ if a second order version of the algorithm LS-ARC is used, by making $B_k$ equals to the exact Hessian or at least being a good approximation of the exact Hessian (as in Assumption \ref{asm:B} of Section \ref{section:3}). In this case, modify the matrix $B_k$ using regularization techniques such that the step $s_k^Q$ approximates the linear system $B_ks = -g_k$ and $|g_k^\top s^Q_k| \ge  \epsilon_d  \|g_k\|\|s^Q_k\|$ is not trivial anymore. This second order version of the algorithm LS-ARC will be discussed in details in Section \ref{section:3} where convergence and complexity analysis, when the proposed $M_k$-norm is used, will be outlined.

\section{On the Cubic Model Minimization}
\label{section:2}
In this section, we assume that the linear system $B_ks = -g_k$ has a solution. 
We will mostly focus on the solution of the subproblem (\ref{eq:nl_ARC_subproblem}) for a given outer iteration $k$.  In particular, we will explicit the condition to impose on the matrix $M_k$ in order to get the solution of the  ARC subproblem collinear with the  step $s^Q_k$.  Hence, in such case, one can get the solution of the  ARC subproblem at a modest computational cost. 

The step $s^Q_k$ can be obtained exactly using a direct method if the matrix $B_k$ is not too large. Typically, one can use the $LDL^T$ factorization to solve this linear system. For large scale optimization problems, computing $s^Q_k$ can be prohibitively computationally expensive. We will show that it will be possible to relax this requirement by letting the step $s_k^Q$ be only an approximation of the exact solution using subspace methods.  

In fact, when an approximate solution is used and as far as the global convergence of Algorithm~\ref{algo:ARC} is concerned, all what is needed is that the solution of the subproblem (\ref{eq:nl_ARC_subproblem}) yields a decrease in the cubic model which is as good as the Cauchy decrease (as emphasized  in condition  (\ref{Cauchy_decrease})). 
In practice, a version of Algorithm~\ref{algo:LS-ARC} solely based on the  Cauchy
step would suffer from the same drawbacks as the steepest descent algorithm on ill-conditioned problems and faster convergence can be expected if the matrix $B_k$ influences also the minimization direction. The main idea consists of achieving a further decrease on the cubic model, better than the Cauchy decrease, by projection onto a sequence of embedded Krylov subspaces. We now show how to use a similar idea to compute a solution of the subproblem that is  computationally cheap and  yields the global convergence of Algorithm~\ref{algo:LS-ARC}. 

A  classical way to approximate the exact solution $s^Q_k$ is by using subspace methods, typically a Krylov subspace method. For that, let $\mathcal{L}_k$ be a subspace of $\real^n$ and $l$ its dimension. Let $Q_k$ denotes an $n \times l$ matrix whose columns form a basis of $\mathcal{L}_k$. Thus for all $s \in \mathcal{L}_k$, we have $s_k=Q_kz_k$, for some $z_k \in \real^l$. In this case, $s^Q_k$ denotes the exact stationary point of the model function $m^Q$ over the subspace $\mathcal{L}_k$ when it exists.

For both cases, exact and inexact, we will assume that the step $s_k^Q$ is not orthogonal with the gradient $g_k$.
In what comes next, we state our assumption on $s_k^Q$ formally as follows: 
\begin{assumption}
\label{asm:1}
The model $m_k^Q$ admits a stationary point $s_k^Q$ such as $|g_k^{\top}s_k^Q | \ge \epsilon_d \|g_k \| \| s_k^Q\| $ where $\epsilon_d>0$ is a pre-defined positive constant. 
\end{assumption}
We define also  a Newton-like step $s_k^{\NT}$ associated with the minimization of the cubic model $m_k$ over the subspace $\mathcal{L}_k$ on the following way, when $s^Q_k$ corresponds to the exact solution of $B_ks=-g_k$ by 
  \begin{eqnarray}
\label{Newton_step_ARC}
s_k^{\NT} =  \delta_k^{\NT} s^Q_k,  ~~\mbox{where} ~~ \delta^{\NT}_k=  \arg \min_{\delta \in \mathcal{I}_k } m_k(\delta s_k^{Q}),
\end{eqnarray}
where $\mathcal{I}_k=\real_+$ if $ g_k^{\top}  s_k^{Q} < 0$ and $\mathcal{I}_k=\real_-$ otherwise.
If $s_k^Q$ is computed using an iterative subspace method, then
$
s^{\NT}_k = Q_k z^{\NT}_k,
$
where $z^{\NT}_k$ is the Newton-like step , as in (\ref{Newton_step_ARC}), associated to the following reduced subproblem:
   \begin{eqnarray}
 \label{reduce_model_arc}
 \min_{z\in \real^l} f(x_k) + z^{\top} Q_k^{\top} g_k + \frac{1}{2} z^{\top} Q_k^{\top} B_k Q_k z + \frac{1}{3}\sigma_k \| z \|^3_{Q_k^{\top} M_k Q_k}.
  \end{eqnarray}
\begin{theorem} \label{th:1} Let Assumption \ref{asm:1} hold. The Newton-like step $s^{\NT}_k$ is of the form
 \begin{eqnarray}
 \label{eqdeltaarcN}
 s_k^{\NT} =  \delta_k^{\NT} s^Q_k,  ~~\mbox{where}~~ \delta_k^{\NT}& = & \frac{2 }{1-\sign(g_k^{\top}s_k^Q) \sqrt{1 +4 \frac{\sigma_k \| s_k^Q \|^3_{M_k}}{|g_k^{\top}s^Q_k|} }} .
\end{eqnarray}
\end{theorem}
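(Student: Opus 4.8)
The plan is to reduce the minimization of the cubic model $m_k$ over the ray $\{\delta s_k^Q : \delta \in \mathcal{I}_k\}$ to a one-dimensional optimization problem and to solve the resulting scalar equation in closed form. First I would substitute $s = \delta s_k^Q$ into $m_k(s) = f(x_k) + s^\top g_k + \tfrac12 s^\top B_k s + \tfrac13 \sigma_k \|s\|_{M_k}^3$, obtaining a univariate function $\phi_k(\delta) = f(x_k) + \delta\, g_k^\top s_k^Q + \tfrac12 \delta^2 (s_k^Q)^\top B_k s_k^Q + \tfrac13 \sigma_k |\delta|^3 \|s_k^Q\|_{M_k}^3$. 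Since $s_k^Q$ is a stationary point of $m_k^Q$ (either the exact solution of $B_k s = -g_k$ or the stationary point over the Krylov subspace $\mathcal{L}_k$), it satisfies $B_k s_k^Q = -g_k$ in the relevant sense, so $(s_k^Q)^\top B_k s_k^Q = -g_k^\top s_k^Q$; this is the key simplification that collapses the quadratic part, giving $\phi_k(\delta) = f(x_k) + (\delta - \tfrac12 \delta^2) g_k^\top s_k^Q + \tfrac13 \sigma_k |\delta|^3 \|s_k^Q\|_{M_k}^3$.

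Next I would restrict $\delta$ to the correct half-line: by definition $\mathcal{I}_k = \real_+$ when $g_k^\top s_k^Q < 0$ and $\mathcal{I}_k = \real_-$ otherwise, which in both cases makes $|\delta| = \sign(g_k^\top s_k^Q)\cdot(-\delta)$ work out so that the linear term $\delta\, g_k^\top s_k^Q$ is negative for small $|\delta|$, i.e. the step decreases the model. Writing $t = |\delta| \ge 0$ and using $\delta = -\sign(g_k^\top s_k^Q)\, t$, the model becomes $\psi_k(t) = f(x_k) - t |g_k^\top s_k^Q| - \tfrac12 t^2 g_k^\top s_k^Q \cdot (-\sign(g_k^\top s_k^Q))\cdot(\text{sign bookkeeping}) + \tfrac13 \sigma_k t^3 \|s_k^Q\|_{M_k}^3$; carefully, the quadratic coefficient is $-\tfrac12 (g_k^\top s_k^Q)$, and since $g_k^\top s_k^Q$ has the opposite sign to $\delta$, one checks $-(\delta-\tfrac12\delta^2)g_k^\top s_k^Q = -t|g_k^\top s_k^Q| - \tfrac12 t^2 |g_k^\top s_k^Q|\,(\cdot)$ — I will keep the signs explicit. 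Setting $\psi_k'(t) = 0$ yields a quadratic equation in $t$ of the form $\sigma_k \|s_k^Q\|_{M_k}^3\, t^2 + |g_k^\top s_k^Q|\, t - |g_k^\top s_k^Q| = 0$ (after dividing through by the appropriate positive constant), whose unique positive root is $t = \dfrac{2}{1 + \sqrt{1 + 4\sigma_k \|s_k^Q\|_{M_k}^3 / |g_k^\top s_k^Q|}}$; translating back through $\delta_k^{\NT} = -\sign(g_k^\top s_k^Q)\, t$ and simplifying the sign inside the formula gives exactly (\ref{eqdeltaarcN}).

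The main obstacle I anticipate is the sign bookkeeping: getting the quadratic coefficient's sign right after the $B_k$-substitution, confirming that the stationary point of $\psi_k$ on $\mathbb{R}_+$ is a genuine minimizer (the cubic term with positive leading coefficient guarantees $\psi_k \to +\infty$, and $\psi_k'(0) = -|g_k^\top s_k^Q| < 0$ forces the unique critical point to be the global minimum on the half-line), and matching the $\pm$ in the denominator of (\ref{eqdeltaarcN}) — the formula has $1 - \sign(g_k^\top s_k^Q)\sqrt{\cdots}$ in the denominator, so I must verify that $\delta_k^{\NT} = -\sign(g_k^\top s_k^Q)\cdot \frac{2}{1+\sqrt{\cdots}}$ rationalizes to that expression, using $\sign(g_k^\top s_k^Q)^2 = 1$. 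The reduced-subproblem case (\ref{reduce_model_arc}) needs one extra remark: over $\mathcal{L}_k$ the relation $Q_k^\top B_k Q_k z_k^Q = -Q_k^\top g_k$ holds by stationarity, and $\|Q_k z\|_{M_k} = \|z\|_{Q_k^\top M_k Q_k}$, so the identical one-dimensional reduction applies with $z_k^Q$ in place of $s_k^Q$, and pulling back through $s_k^{\NT} = Q_k z_k^{\NT}$ with $\|z_k^Q\|_{Q_k^\top M_k Q_k} = \|s_k^Q\|_{M_k}$ recovers the stated form. Apart from this, everything is routine one-variable calculus.
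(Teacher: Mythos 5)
Your overall route is the paper's: restrict $m_k$ to the ray $\{\delta s_k^Q\}$, use the stationarity relation $B_k s_k^Q=-g_k$ (or its reduced analogue $Q_k^\top B_k Q_k z_k^Q=-Q_k^\top g_k$) to replace $(s_k^Q)^\top B_k s_k^Q$ by $-g_k^\top s_k^Q$, set the derivative of the resulting univariate cubic to zero, solve the quadratic, and pull the subspace case back through $Q_k$. The genuine problem is in your unified substitution $\delta=-\sign(g_k^\top s_k^Q)\,t$: since $\delta^2=t^2$ carries no sign, the quadratic term of $\phi_k$ becomes $-\tfrac12 (g_k^\top s_k^Q)\,t^2$, not $+\tfrac12|g_k^\top s_k^Q|\,t^2$ as your extra ``sign bookkeeping'' factor produces. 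The correct stationarity equation is therefore $\sigma_k\|s_k^Q\|_{M_k}^3\,t^2-(g_k^\top s_k^Q)\,t-|g_k^\top s_k^Q|=0$, which agrees with the equation you wrote only when $g_k^\top s_k^Q<0$. When $g_k^\top s_k^Q>0$ the model has negative curvature along $s_k^Q$, the correct positive root is $t=2/\bigl(\sqrt{1+4\sigma_k\|s_k^Q\|_{M_k}^3/|g_k^\top s_k^Q|}-1\bigr)$, strictly larger than your $2/\bigl(1+\sqrt{1+4\sigma_k\|s_k^Q\|_{M_k}^3/|g_k^\top s_k^Q|}\bigr)$, and your $\delta_k^{\NT}=-t$ does not rationalize to (\ref{eqdeltaarcN}): for instance with $\sigma_k\|s_k^Q\|_{M_k}^3=|g_k^\top s_k^Q|$ formula (\ref{eqdeltaarcN}) gives $2/(1-\sqrt5)\approx-1.618$ while your root gives $-2/(1+\sqrt5)\approx-0.618$.

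The fix is local. Either carry the correct linear coefficient $-(g_k^\top s_k^Q)$ through the unified quadratic, whose positive root $\bigl(g_k^\top s_k^Q+\sqrt{(g_k^\top s_k^Q)^2+4\sigma_k\|s_k^Q\|_{M_k}^3|g_k^\top s_k^Q|}\,\bigr)/\bigl(2\sigma_k\|s_k^Q\|_{M_k}^3\bigr)$ does rationalize to $|\delta_k^{\NT}|$ in both cases, or treat the two signs of $g_k^\top s_k^Q$ separately as the paper does (note the paper itself has a harmless sign typo in the intermediate radical expression of the second case, but its final $\sign$-formula is correct). Your remaining points are sound: the constant term of the quadratic is negative and the leading term positive, so there is exactly one positive root, and $\psi_k'(0)=-|g_k^\top s_k^Q|<0$ together with $\psi_k(t)\to+\infty$ makes it the global minimizer on the half-line; the reduced-subproblem remark via $\|Q_kz\|_{M_k}=\|z\|_{Q_k^\top M_kQ_k}$ is exactly the paper's second half.
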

\begin{proof}
Consider first the case where the step $s^Q_k$ is computed exactly (i.e., $B_ks_k^Q=-g_k$). In this case, for all $\delta \in \mathcal{I}_k$, one has
 \begin{eqnarray}
 \label{min_N_ARC_init}
m_k(\delta s_k^Q) - m_k(0) &=&  \delta g_k^{\top}s^{Q}_k + \frac{\delta^2}{2} [s^{Q}_k]^{\top} B_k [s^{Q}_k] + \frac{\sigma |\delta|^3 }{3} \| s^{Q}_k \|^3_{M_k} \nonumber \\ 
&=&  (g_k^{\top}s^{Q}_k)   \delta  -  (g_k^{\top}s_k^Q) \frac{\delta^2}{2} + (\sigma_k \| s_k^Q \|^3_{M_k}) \frac{ |\delta|^3}{3}.
\end{eqnarray}

If $g_k^{\top}s_k^Q<0$ (hence, $\mathcal{I}_k=\real_+$), we compute the value of the parameter $\delta^N_k$ at which the unique minimizer of the above function is attained. 
Taking the derivative of (\ref{min_N_ARC_init})  with respect to $\delta$ and equating the result to zero, one gets
 \begin{eqnarray}
 \label{equ:n:1}
0 & =& g_k^{\top}s^{Q}_k  -  (g_k^{\top}s_k^Q) \delta^{\NT}_k + \sigma_k \| s_k^Q \|^3_{M_k} \left(\delta^{\NT}_k\right)^2 ,
\end{eqnarray}
and thus, since $ \delta^{\NT}_k>0$,
 \begin{eqnarray*}
\delta^{\NT}_k& =& \frac{g_k^{\top}s_k^Q+ \sqrt{ \left(g_k^{\top}s_k^Q\right)^2 - 4 \sigma_k (g_k^{\top}s_k^Q)\| s_k^Q \|^3_{M_k} }}{ 2 \sigma_k \| s_k^Q \|^3_{M_k}} = \frac{2 }{1 +  \sqrt{1 - 4 \frac{\sigma_k \| s_k^Q \|^3_{M_k}}{g_k^{\top}s^Q_k} }}.
\end{eqnarray*}

If $g_k^{\top}s_k^Q>0$  (hence, $\mathcal{I}_k=\real_-$),and again by taking the derivative of (\ref{min_N_ARC_init})  with respect to $\delta$ and equating the result to zero, one gets
 \begin{eqnarray}
 \label{equ:n:2}
0 & =& g_k^{\top}s^{Q}_k  -  (g_k^{\top}s_k^Q) \delta^{\NT}_k - \sigma_k \| s_k^Q \|^3_{M_k} \left(\delta^{\NT}_k\right)^2 ,
\end{eqnarray}
and thus, since $ \delta^{\NT}_k<0$ in this case,
 \begin{eqnarray*}
\delta^{\NT}_k& =& \frac{g_k^{\top}s_k^Q+ \sqrt{ \left(g_k^{\top}s_k^Q\right)^2 + 4 \sigma_k (g_k^{\top}s_k^Q)\| s_k^Q \|^3_{M_k} }}{ 2 \sigma_k \| s_k^Q \|^3_{M_k}} = \frac{2 }{1 -  \sqrt{1 + 4 \frac{\sigma_k \| s_k^Q \|^3_{M_k}}{g_k^{\top}s^Q_k} }}.
\end{eqnarray*}
From both cases, one deduces that
$ \delta^{\NT}_k=  \frac{2 }{1-\sign(g_k^{\top}s_k^Q) \sqrt{1 +4 \frac{\sigma_k \| s_k^Q \|^3_{M_k}}{|g_k^{\top}s^Q_k|} }} .$

Consider now the case where $s_k^Q$ is computed using an iterative subspace method. In this cas, one has $s^{\NT}_k= Q_k z^{\NT}_k$, where $z^{\NT}_k$ is the Newton-like step associated to the reduced subproblem (\ref{reduce_model_arc}). Hence by applying the first part of the proof (the exact case) to the reduced subproblem (\ref{reduce_model_arc}), it follows that
$$z^{\NT}_k  =   \bar{\delta}^{\NT}_k z^Q_k~~\mbox{ where~~}  \bar{\delta}^{\NT}_k=  \frac{2 }{1-\sign((Q_k^{\top} g_k)^{\top}z^Q_k) \sqrt{1 +4 \frac{\sigma_k \| z^Q_k \|^3_{{Q_k^{\top} M_k Q_k}}}{|(Q_k^{\top} g_k)^{\top}z^Q_k|} }}, $$ where $z^Q_k$ is a stationary point of the quadratic part of the minimized model in (\ref{reduce_model_arc}). Thus, by substituting $z^{\NT}_k$ in the formula $s^{\NT}_k= Q_k z^{\NT}_k$, one gets
\begin{eqnarray*}
s^{\NT}_k &= & Q_k \left(   \frac{2 }{1-\sign((Q_k^{\top} g_k)^{\top}z^Q_k) \sqrt{1 +4 \frac{\sigma_k \| z^Q_k \|^3_{{Q_k^{\top} M_k Q_k}}}{|Q_k^{\top} g_k)^{\top}z^Q_k|} }} z^Q_k\right) \\
&=& \frac{2}{1- \sign(g_k^{\top}Q_k z^Q_k)\sqrt{1 + 4 \frac{\sigma_k  \| Q_k z^Q_k \|^3_{{M}_k}}{|g_k^{\top}Q_k z^Q_k|}}}  Q_k z^Q_k \\
&=&  \frac{2}{1- \sign(g_k^{\top}s_k^Q)\sqrt{1 + 4 \frac{\sigma_k  \| s_k^Q\|^3_{{M}_k}}{|g_k^{\top}s_k^Q|}}}  s_k^Q.
\end{eqnarray*}
\end{proof}

In general, for ARC algorithm, the matrix $M_k$ can be any arbitrary SPD matrix. Our goal, in this section, is to determine how one can choose the matrix $M_k$ so that the  Newton-like step $s^{\NT}_k$  becomes a  stationary point of the subproblem (\ref{eq:nl_ARC_subproblem}). The following theorem gives explicitly the necessary and sufficient  condition on the matrix $M_k$ to reach this aim.
\begin{theorem} \label{th:2} Let Assumption \ref{asm:1} hold. The step  $s^{\NT}_k$ is a stationary point for the subproblem (\ref{eq:nl_ARC_subproblem}) if and only if  there exists $\theta_k >0$ such that $M_ks_k^Q= \frac{\theta_k}{g_k^{\top}s_k^Q}g_k$.
 Note that $\theta_k=\|s^Q_k\|^2_{M_k}$.
\end{theorem}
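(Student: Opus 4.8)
The plan is to characterize stationarity of the cubic model $m_k$ at the point $s_k^{\NT} = \delta_k^{\NT} s_k^Q$ and show it is equivalent to the stated condition on $M_k$. Recall that $s_k^{\NT}$ is a stationary point of $m_k$ on the whole space $\real^n$ if and only if $\nabla m_k(s_k^{\NT}) = 0$, i.e.
\begin{equation*}
g_k + B_k s_k^{\NT} + \sigma_k \| s_k^{\NT}\|_{M_k} M_k s_k^{\NT} = 0 .
\end{equation*}
First I would substitute $s_k^{\NT} = \delta_k^{\NT} s_k^Q$ and use the defining property of $s_k^Q$, namely $B_k s_k^Q = -g_k$ (in the exact case; the subspace case reduces to this via Theorem~\ref{th:1}, applied to the reduced problem and then lifted by $Q_k$). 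This turns $g_k + B_k s_k^{\NT}$ into $g_k - \delta_k^{\NT} g_k = (1-\delta_k^{\NT}) g_k$, so the stationarity equation collapses to a collinearity requirement between $M_k s_k^Q$ and $g_k$:
\begin{equation*}
(1 - \delta_k^{\NT}) g_k + \sigma_k |\delta_k^{\NT}| \, \| s_k^Q \|_{M_k} \, \delta_k^{\NT} \, M_k s_k^Q = 0 .
\end{equation*}
Hence $M_k s_k^Q$ must be a scalar multiple of $g_k$; writing $M_k s_k^Q = \mu_k g_k$ and taking the inner product with $s_k^Q$ gives $\mu_k = \|s_k^Q\|_{M_k}^2 / (g_k^\top s_k^Q)$, which is exactly the claimed form with $\theta_k = \|s_k^Q\|_{M_k}^2 > 0$ (positivity because $M_k$ is SPD and $s_k^Q \neq 0$, the latter following from Assumption~\ref{asm:1} since $g_k \neq 0$).

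For the converse, I would assume $M_k s_k^Q = \frac{\theta_k}{g_k^\top s_k^Q} g_k$ for some $\theta_k > 0$, check that this forces $\theta_k = \|s_k^Q\|_{M_k}^2$ (again by pairing with $s_k^Q$), and then verify that the scalar equation obtained by projecting $\nabla m_k(\delta s_k^Q) = 0$ onto the line $\real s_k^Q$ is precisely the first-order condition defining $\delta_k^{\NT}$ in Theorem~\ref{th:1} — equations~(\ref{equ:n:1}) and~(\ref{equ:n:2}) there. Since $M_k s_k^Q$ is collinear with $g_k$, the full gradient $\nabla m_k(s_k^{\NT})$ lies in $\mathrm{span}\{g_k\} = \mathrm{span}\{s_k^Q\}$ too (using $B_k s_k^Q = -g_k$), so vanishing on that line is the same as vanishing in $\real^n$; this closes the equivalence. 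A small point to handle carefully is the sign of $\delta_k^{\NT}$ and the appearance of $|\delta_k^{\NT}|$ in $\|s_k^{\NT}\|_{M_k} = |\delta_k^{\NT}| \, \|s_k^Q\|_{M_k}$, together with the sign of $g_k^\top s_k^Q$: one should track these so that the reduction of the stationarity identity matches (\ref{equ:n:1})--(\ref{equ:n:2}) with the correct signs in each of the two cases $g_k^\top s_k^Q < 0$ and $g_k^\top s_k^Q > 0$.

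The main obstacle, such as it is, is mostly bookkeeping rather than conceptual: one must be sure that "stationary point of the subproblem" is read as a critical point of $m_k$ on $\real^n$ (not merely on the line through $s_k^Q$), so that the collinearity of $M_k s_k^Q$ with $g_k$ is genuinely \emph{necessary} and not just an artifact of restricting to that line — this is where $B_k s_k^Q = -g_k$ does the real work, by guaranteeing $B_k s_k^{\NT} + g_k$ is already in the right span. In the inexact/subspace case the same argument is run for the reduced model (\ref{reduce_model_arc}) with data $Q_k^\top g_k$, $Q_k^\top B_k Q_k$, $Q_k^\top M_k Q_k$, and then one checks that the reduced condition $\left(Q_k^\top M_k Q_k\right) z_k^Q = \frac{\theta_k}{(Q_k^\top g_k)^\top z_k^Q}\, Q_k^\top g_k$ is implied by the full-space condition on $M_k$ after left-multiplying by $Q_k^\top$ and using $s_k^Q = Q_k z_k^Q$; I would remark on this briefly rather than repeat the computation.
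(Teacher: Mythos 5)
Your argument is correct and follows essentially the same route as the paper: substitute $s_k^{\NT}=\delta_k^{\NT}s_k^Q$ and $B_ks_k^Q=-g_k$ into $\nabla m_k(s_k^{\NT})=0$, use the scalar first-order condition defining $\delta_k^{\NT}$ to reduce stationarity to collinearity of $M_ks_k^Q$ with $g_k$, and recover $\theta_k=\|s_k^Q\|_{M_k}^2$ by pairing with $s_k^Q$, exactly as in the paper's chain of identities (including the reduction of the subspace case to the exact case via $Q_k$). One parenthetical slip: $\mathrm{span}\{g_k\}$ need not equal $\mathrm{span}\{s_k^Q\}$; what your converse actually uses is that $\nabla m_k(s_k^{\NT})$ is a scalar multiple of $g_k$ together with $g_k^{\top}s_k^Q\neq 0$ from Assumption \ref{asm:1}, so the vanishing of the derivative along $s_k^Q$ already forces the full gradient to vanish.
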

\begin{proof} 
Indeed, in the exact case, if we suppose that the step $ s^{\NT}_k$ is a stationary point of the subproblem~(\ref{eq:nl_ARC_subproblem}), this means that
 \begin{eqnarray}
 \label{equ:1}
 \nabla m_k(s^{\NT}_k) & = & g_k + B_ks^{\NT}_k+\sigma_k \|s^{\NT}_k\|_{M_k} M_k s^{\NT}_k= 0,
\end{eqnarray}
In  another hand, $s^{\NT}_k= \delta^{\NT}_k s^{Q}_k$ where $\delta^{\NT}_k$ is solution of $
g_k^{\top}s^{Q}_k  -  (g_k^{\top}s_k^Q)\delta^{\NT}_k +\sigma_k \| s_k^Q \|^3_{M_k} |\delta^{\NT}_k| \delta^{\NT}_k =0$ (such equation can be deduced from (\ref{equ:n:1}) and (\ref{equ:n:2})). Hence, we obtain that
\begin{eqnarray*}
0 & =& \nabla m_k(s^{\NT}_k) =g_k - \delta^{\NT}_k g_k +  \sigma_k   |\delta^{\NT}_k| \delta^{\NT}_k   \| s_k^Q \|_{M_k} M_k s^Q_k   \nonumber \\
&=& \left( 1 - \delta^{\NT}_k \right)g_k + \left( \frac{\sigma_k \| s_k^Q\|_{M_k}^3 }{g_k^{\top}s_k^Q}   |\delta^{\NT}_k| \delta^{\NT}_k \right) \left(  \frac{g_k^{\top}s_k^Q}{\| s^Q\|_{M_k}^2} M_k s_k^Q \right) \\
 &=&  \left( \frac{\sigma_k \| s_k^Q\|_{M_k}^3 }{g_k^{\top}s_k^Q}   |\delta^{\NT}_k| \delta^{\NT}_k \right)   \left(g_k -   \frac{g_k^{\top}s_k^Q}{\| s_k^Q\|_{M_k}^2} M_k s_k^Q \right).
\end{eqnarray*}
Equivalently, we conclude that
$
M_k s_k^Q= \frac{\theta_k}{g_k^{\top}s_k^Q}g_k
$ where $\theta_k =  \| s_k^Q\|_{M_k}^2 >0$. 
A similar proof applies when a subspace method is used to compute $s_k^Q$.
\end{proof}

The key condition to ensure that the ARC subproblem stationary point is equal to the Newton-like step $s^{\NT}_k$,
 is the choice of the matrix $M_k$ which satisfies the following secant-like equation $M_ks_k^Q= \frac{\theta_k}{g_k^{\top}s_k^Q}g_k$ for a given $\theta_k>0$.  The existence of such matrix $M_k$ is not problematic as far as Assumption \ref{asm:1} holds. In fact, Theorem \ref{th:equivnm} will explicit  a range of  $\theta_k>0$ for which the matrix $M_k$ exists. Note that in the formula of $s^{\NT}_k$, such matrix is used only through the computation of the $M_k$-norm of $s_k^Q$. Therefore an explicit formula of the matrix $M_k$ is not needed,  and only the value of $\theta_k = \|s_k^Q\|_{M_k}^2$ suffices for the computations. 

When the matrix $M_k$ satisfies the desired properties (as in Theorem \ref{th:2}), one is ensured that $ s^{\NT}_k$ is a stationary point for the model $m_k$. However, ARC algorithm imposes on the approximate step to satisfy the Cauchy decrease given by (\ref{Cauchy_decrease}), and such condition is not guaranteed by $s^{\NT}_k$ as the model $m_k$ may be non-convex.  In the next theorem, we show that for a sufficiently large $\sigma_k$, $ s^{\NT}_k$ is getting the global minimizer of $m_k$ and thus satisfying the Cauchy decrease is not an issue anymore. 
\begin{theorem} \label{cor:1} 
Let Assumption \ref{asm:1} hold. Let $M_k$ be  an SPD matrix which satisfies $M_ks_k^Q= \frac{\theta_k}{g_k^{\top}s_k^Q}g_k$ for a fixed $\theta_k >0$. 
If the matrix $Q_k^{\top}(B_k + \sigma_k \|s^{\NT}_k\|_{M_k} M_k)Q_k$ is positive definite, then the step $s^{\NT}_k$ is the unique minimizer of the subproblem (\ref{eq:nl_ARC_subproblem}) over the subspace $\mathcal{L}_k$. 
 \end{theorem}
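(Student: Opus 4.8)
The plan is to pass to the reduced subproblem over $\mathcal{L}_k$ and then invoke the classical global‑optimality characterization for cubic‑regularized quadratic models, adapted to a general scaled norm. Writing $s=Q_k z$ turns the minimization of $m_k$ over $\mathcal{L}_k$ into the unconstrained minimization over $\real^l$ of the reduced model in (\ref{reduce_model_arc}), namely $\bar m_k(z)=f(x_k)+\bar g_k^{\top}z+\tfrac12 z^{\top}\bar B_k z+\tfrac13\sigma_k\|z\|_{\bar M_k}^3$ with $\bar g_k=Q_k^{\top}g_k$, $\bar B_k=Q_k^{\top}B_kQ_k$ and $\bar M_k=Q_k^{\top}M_kQ_k$; since the columns of $Q_k$ are linearly independent and $M_k$ is SPD, $\bar M_k$ is SPD as well. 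When $s_k^Q$ solves $B_ks=-g_k$ exactly one simply takes $Q_k=I_n$ and $\mathcal{L}_k=\real^n$, so the exact and subspace cases are treated uniformly by this reduction.

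The key ingredient I would use is the following sufficient condition: for a cubic model $\bar m(z)=c+\bar g^{\top}z+\tfrac12 z^{\top}\bar B z+\tfrac13\sigma\|z\|_{\bar M}^3$ with $\bar M$ SPD and $\sigma>0$, any point $z^{\ast}$ satisfying $(\bar B+\lambda\bar M)z^{\ast}=-\bar g$ with $\lambda=\sigma\|z^{\ast}\|_{\bar M}$ and $\bar B+\lambda\bar M$ positive definite is the unique global minimizer of $\bar m$ over $\real^l$. This is the Griewank / Nesterov--Polyak / Cartis--Gould--Toint characterization \cite{AGriewank_1981,YNesterov_BTPolyak_2006,CCartis_NIMGould_PhLToint_2011_a} stated for the Euclidean norm; the scaled‑norm version follows at once from the change of variables $w=\bar M^{1/2}z$, under which $\|z\|_{\bar M}=\|w\|_2$ and $\bar m$ becomes a standard Euclidean cubic model in $w$ with Hessian $\bar M^{-1/2}\bar B\bar M^{-1/2}$ and linear term $\bar M^{-1/2}\bar g$, the positive‑definiteness condition being preserved by congruence.

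It then remains to verify that $z_k^{\NT}$, defined by $s_k^{\NT}=Q_k z_k^{\NT}$, meets both requirements. Pairing the secant‑like equation $M_ks_k^Q=\tfrac{\theta_k}{g_k^{\top}s_k^Q}g_k$ with $s_k^Q$ gives $\theta_k=\|s_k^Q\|_{M_k}^2$, so Theorem~\ref{th:2} applies and $s_k^{\NT}$ is a stationary point of $m_k$ over $\mathcal{L}_k$; equivalently $\nabla\bar m_k(z_k^{\NT})=0$, i.e.\ $(\bar B_k+\sigma_k\|z_k^{\NT}\|_{\bar M_k}\bar M_k)z_k^{\NT}=-\bar g_k$. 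Since $\|z_k^{\NT}\|_{\bar M_k}^2=(z_k^{\NT})^{\top}Q_k^{\top}M_kQ_k z_k^{\NT}=\|s_k^{\NT}\|_{M_k}^2$, this is exactly the first condition with $\lambda_k=\sigma_k\|s_k^{\NT}\|_{M_k}$, while the hypothesis that $Q_k^{\top}(B_k+\sigma_k\|s_k^{\NT}\|_{M_k}M_k)Q_k=\bar B_k+\lambda_k\bar M_k$ is positive definite is precisely the second condition. The characterization then gives that $z_k^{\NT}$ is the unique global minimizer of $\bar m_k$ over $\real^l$, hence $s_k^{\NT}=Q_k z_k^{\NT}$ is the unique minimizer of (\ref{eq:nl_ARC_subproblem}) over $\mathcal{L}_k$.

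The only genuine content is the global‑minimizer characterization with a non‑Euclidean scaling matrix, and this is routine once the affine change of variables $w=\bar M^{1/2}z$ is used; everything else is bookkeeping, the essential points being the identity $\theta_k=\|s_k^Q\|_{M_k}^2$, the equality $\|z_k^{\NT}\|_{\bar M_k}=\|s_k^{\NT}\|_{M_k}$, and the recognition that the theorem's positive‑definiteness hypothesis, read in the subspace coordinates, is exactly the positive‑definiteness clause of that characterization. I do not anticipate any real obstacle beyond keeping the consistency relation $\lambda_k=\sigma_k\|s_k^{\NT}\|_{M_k}$ straight throughout.
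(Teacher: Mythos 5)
Your proposal is correct and follows essentially the same route as the paper: invoke Theorem~\ref{th:2} to get the stationarity system $(B_k+\lambda^{\NT}_k M_k)s^{\NT}_k=-g_k$ with $\lambda^{\NT}_k=\sigma_k\|s^{\NT}_k\|_{M_k}$, then apply the classical global-minimizer characterization of the cubic model under the positive-definiteness hypothesis, passing to the reduced subproblem (\ref{reduce_model_arc}) for the subspace case. The only cosmetic differences are that you treat the exact and subspace cases uniformly from the outset and you spell out the $\bar M_k^{1/2}$ change of variables to justify the scaled-norm version of the characterization, which the paper simply cites.
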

 \begin{proof}
Indeed, when $s^Q$ is computed exactly (i.e., $Q_k=I_n$ and $\mathcal{L}_k=\real^n$), then using \cite[Theorem 3.1]{Conn_Gould_Toin_2000} one has that, for a given vector $s^*_k$, it is a global minimizer of $m_k$ if and only if it satisfies  
$$
(B_k + \lambda^*_k M_k)s^*_k=-g_k
$$
where $B_k +  \lambda^*_k M_k$ is positive semi-definite matrix and $\lambda^*_k= \sigma_k \|s^*_k\|_{M_k}$. Moreover, if $B_k +  \lambda^*_k M_k$ is positive definite, $s^*_k$ is unique.

Since $M_ks_k^Q= \frac{\theta_k}{g_k^{\top}s_k^Q}g_k$, by applying Theorem \ref{th:2}, we see that  
$$
(B_k + \lambda^{\NT}_k M_k)s^{\NT}_k=-g_k
$$
with $\lambda^{\NT}_k=\sigma_k \|s^{\NT}_k\|_{M_k}$. Thus, if we assume that $B_k + \lambda^{\NT}_k M_k$ is positive definite matrix, then $s^{\NT}_k$ is the unique global minimizer of the subproblem (\ref{eq:nl_ARC_subproblem}). 

Consider now, the case where $s^Q_k$ is computed using a subspace method, since $M_ks^Q_k= \frac{\theta_k}{g_k^{\top}s_k^Q}g_k$ one has that $Q_k^{\top}M_kQ_k z^Q_k= \frac{\theta_k}{(Q_k^{\top} g_k)^{\top}z^Q_k} Q_k^{\top}g_k$. 
Hence, if we suppose that the matrix $Q_k^{\top}(B_k + \lambda^{\NT}_k M_k)Q_k$ is positive definite, by applying the same proof of the exact case to the reduced subproblem (\ref{reduce_model_arc}), we see that the step $z^{\NT}_k$ is the unique global minimizer of the subproblem  (\ref{reduce_model_arc}). We conclude that $s^{\NT}_k= Q_k z^{\NT}_k$ is the global minimizer of the subproblem  (\ref{eq:nl_ARC_subproblem}) over the subspace $\mathcal{L}_k$. 
\end{proof}

Theorem \ref{cor:1} states that the step $s^{\NT}_k$ is the global minimizer of the cubic model $m_k$ over the subspace $\mathcal{L}_k$ as far as the matrix $Q_k^{\top}(B_k + \sigma_k \|s_k^{\NT}\|_{M_k} M_k)Q_k$ is positive definite, where $\lambda^{\NT}_k=\sigma_k \|s^{\NT}_k\|_{M_k}$. 
Note that  
\begin{eqnarray*}
\lambda^{\NT}_k &= & \sigma_k \|s^{\NT}_k\|_{M_k} = \frac{2 \sigma_k  \| s^Q_k \|_{M_k}}{\left |1-\sign(g_k^{\top}s_k^Q) \sqrt{1 +4 \frac{\sigma_k \| s_k^Q \|^3_{M_k}}{|g_k^{\top}s_k^Q|} }\right |} \to +\infty~~~~\mbox{as $\sigma_k \rightarrow \infty$.}
\end{eqnarray*}
Thus, since  $M_k$ is an SPD matrix and the regularization parameter $\sigma_k$ is increased for unsuccessful iterations in Algorithm~\ref{algo:ARC},  the positive definiteness of  matrix $Q_k^{\top}(B_k + \sigma_k \|s^{\NT}_k\|_{M_k} M_k)Q_k$ is guaranteed
 after finitely many unsuccessful iterations. 
In other words, one would have insurance that $ s^{\NT}_k$ will satisfy the Cauchy decrease after a certain number of unsuccessful iterations.
\section{Complexity Analysis of the LS-ARC Algorithm}
\label{section:3}

For the well definiteness of the algorithm LS-ARC, one needs first to show that the proposed $M_k$-norm is uniformly equivalent to the Euclidean norm.  The next theorem gives a range of choices for the parameter $\theta_k$ to ensure the existence of an SPD matrix $M_k$ such as $M_ks_k^Q= \frac{\theta_k}{g_k^{\top}s_k^Q }  g_k$ and the $M_k$-norm is uniformly equivalent to the $\ell_2$-norm. 

\begin{theorem} \label{th:equivnm}Let Assumption \ref{asm:1}  hold. If
\begin{equation} \label{I_def}
\begin{tabular}{l}
$ \theta_k = \beta_k \|s^Q_k\|^2 ~~\mbox{where}~ \beta_k \in ]\beta_{\min}, \beta_{\max}[ ~~\mbox{and}~~  \beta_{\max}> \beta_{\min}>0,$
\end{tabular}
\end{equation} then there exists an SPD matrix $M_k$ such as  

\begin{enumerate}[i)]
\item $M_ks_k^Q= \frac{\theta_k}{g_k^{\top}s_k^Q }  g_k$, 
\item the $M_k$-norm is uniformly equivalent to the $\ell_2$-norm on $\real^n$ and for all $x \in \real^n$, one has
\begin{equation} \label{eq:norms}
\frac{\sqrt{\beta_{\min}}}{\sqrt{2} } \|x\| \le \| x\|_{M_k} \le \frac{\sqrt{2 \beta_{\max} }}{\epsilon_d} \| x\|.
  \end{equation}
\item Moreover, one has  $\|s_k^Q\|_{M_k}^2 = \beta_k \|s^Q_k\|^2$ and  $\|g_k\|_{M_k}^2 = \chi_k \|g_k\|^2$ where
 $\chi_k= \beta_k \left(\frac{5}{2}  - \frac{3}{2} \cos(\varpi_k)^2 + 2 \left(\frac{1- \cos(\varpi_k)^2}{\cos(\varpi_k)}\right)^2\right)$ and $\cos(\varpi_k) = \frac{g_k^{\top}s_k^Q}{\|g_k\| \| s_k^Q\|}$
 \end{enumerate}
\end{theorem}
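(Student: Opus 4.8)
Since the statement only asks for the \emph{existence} of one suitable $M_k$, the plan is to write down an explicit candidate and verify (i)--(iii) by direct computation. Throughout, abbreviate $s := s_k^Q$, $g := g_k$ and $c := \cos(\varpi_k) = \frac{g^\top s}{\|g\|\,\|s\|}$; by Assumption~\ref{asm:1} one has $\epsilon_d \le |c| \le 1$, so in particular $g^\top s \neq 0$ and every denominator below is well defined. The matrix I would use is
$$
M_k \;=\; \frac{\beta_k}{2}\Bigl(I_n + z_k z_k^\top\Bigr), \qquad z_k \;:=\; \frac{2\,\|s\|}{g^\top s}\,g \;-\; \frac{s}{\|s\|}.
$$
Because $z_k z_k^\top$ is positive semidefinite and $\beta_k/2 > 0$, $M_k$ is automatically SPD, so the SPD requirement is free and all that remains is algebra.

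The engine of the whole argument is the identity $z_k^\top s = \frac{2\|s\|}{g^\top s}(g^\top s) - \|s\| = \|s\|$. For (i): $(I_n + z_k z_k^\top)s = s + (z_k^\top s)z_k = s + \|s\|\,z_k = \frac{2\|s\|^2}{g^\top s}\,g$, hence $M_k s = \frac{\beta_k\|s\|^2}{g^\top s}\,g = \frac{\theta_k}{g^\top s}\,g$ using $\theta_k = \beta_k\|s\|^2$. For the first half of (iii): $\|s\|_{M_k}^2 = s^\top M_k s = \frac{\beta_k}{2}\bigl(\|s\|^2 + (z_k^\top s)^2\bigr) = \beta_k\|s\|^2$, which in particular confirms $\theta_k = \|s_k^Q\|_{M_k}^2$ as recorded in Theorem~\ref{th:2}.

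For the $g$-part of (iii) and for (ii) I would compute two more scalars. Using $g^\top s = \|g\|\,\|s\|\,c$, one gets $z_k^\top g = \frac{2\|s\|\|g\|^2}{g^\top s} - \frac{g^\top s}{\|s\|} = \|g\|\bigl(\tfrac{2}{c} - c\bigr) = \|g\|\,\tfrac{2-c^2}{c}$, so that
$$
\|g\|_{M_k}^2 = \tfrac{\beta_k}{2}\bigl(\|g\|^2 + (z_k^\top g)^2\bigr) = \tfrac{\beta_k}{2}\,\frac{c^2 + (2-c^2)^2}{c^2}\,\|g\|^2 = \tfrac{\beta_k}{2}\,\frac{c^4 - 3c^2 + 4}{c^2}\,\|g\|^2 ,
$$
and a one-line rearrangement shows $\frac{c^4-3c^2+4}{2c^2} = \tfrac{5}{2} - \tfrac{3}{2}c^2 + \frac{2(1-c^2)^2}{c^2}$, i.e. $\|g\|_{M_k}^2 = \chi_k\|g\|^2$. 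Next, squaring the definition of $z_k$ and using $(g^\top s)^2 = \|g\|^2\|s\|^2 c^2$ gives $\|z_k\|^2 = \frac{4}{c^2} - 3 = \frac{4-3c^2}{c^2}$. Since $I_n + z_k z_k^\top$ has eigenvalue $1+\|z_k\|^2$ along $z_k$ and $1$ on the orthogonal complement, $M_k$ has eigenvalues $\frac{\beta_k}{2}$ (multiplicity $n-1$) and $\frac{\beta_k}{2}(1+\|z_k\|^2) = \frac{\beta_k(2-c^2)}{c^2}$, the latter being the larger since $c^2 \le 1$. Bounding $\frac{\beta_k}{2} \ge \frac{\beta_{\min}}{2}$ from below, and $\frac{\beta_k(2-c^2)}{c^2} \le \frac{2\beta_{\max}}{\epsilon_d^2}$ from above via $c^2 \ge \epsilon_d^2$ and $2-c^2 \le 2$, the Rayleigh-quotient inequalities $\frac{\beta_{\min}}{2}\|x\|^2 \le \|x\|_{M_k}^2 \le \frac{2\beta_{\max}}{\epsilon_d^2}\|x\|^2$ yield (\ref{eq:norms}) after taking square roots.

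The only genuinely nontrivial step here is guessing the matrix $M_k$: once the ``rank-one update of a scaled identity'' form is written down, each of (i)--(iii) collapses to a short calculation. I expect the mild bookkeeping in the $\chi_k$ identity and in the evaluation of $\|z_k\|^2$ to be the only places that need care, and none of it is deep.
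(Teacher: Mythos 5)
Your proof is correct: the matrix $M_k=\tfrac{\beta_k}{2}\bigl(I_n+z_kz_k^\top\bigr)$ with $z_k=\tfrac{2\|s_k^Q\|}{g_k^\top s_k^Q}g_k-\tfrac{s_k^Q}{\|s_k^Q\|}$ is SPD, the identity $z_k^\top s_k^Q=\|s_k^Q\|$ does give (i) and $\|s_k^Q\|_{M_k}^2=\beta_k\|s_k^Q\|^2$, your values $z_k^\top g_k=\|g_k\|\tfrac{2-c^2}{c}$ and $\|z_k\|^2=\tfrac{4-3c^2}{c^2}$ are right, the algebraic identity $\tfrac{c^4-3c^2+4}{2c^2}=\tfrac52-\tfrac32c^2+\tfrac{2(1-c^2)^2}{c^2}$ checks out, and the eigenvalue bounds $\tfrac{\beta_k}{2}$ and $\tfrac{\beta_k(2-c^2)}{c^2}\le\tfrac{2\beta_{\max}}{\epsilon_d^2}$ deliver (\ref{eq:norms}). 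The route differs from the paper's in presentation but not in substance: the paper works in an orthonormal basis $\{\bar s_k^Q,\bar g_k\}$ of $\mathrm{span}\{s_k^Q,g_k\}$, parametrizes the $2\times2$ block $N_k$ by a free entry $\gamma_k$, and solves for $\gamma_k$ so that $\lambda_k^{\min}=\beta_k/2$, then pads with an arbitrary fixed positive diagonal $D$ on the complement. Your rank-one ansatz reproduces \emph{exactly} that block (one can check $\bar g_k^\top M_k\bar g_k=\tfrac{\beta_k}{2}+2\beta_k\tan^2\varpi_k=\gamma_k$, and both constructions have eigenvalues $\tfrac{\beta_k}{2}$ and $\beta_k(1+2\tan^2\varpi_k)=\tfrac{\beta_k(2-c^2)}{c^2}$ on that block), the only difference being $\tfrac{\beta_k}{2}I$ rather than $D$ on the orthogonal complement, which is equally admissible since it is uniformly bounded above and away from zero. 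What your form buys is a coordinate-free closed expression whose verification is purely mechanical (no need to compute roots of a characteristic polynomial or argue monotonicity of eigenvalues in $\gamma_k$); what the paper's form buys is an explicit picture of the full degree of freedom available in choosing $M_k$, of which your matrix is one instance.
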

\begin{proof} Let $\bar{s}_k^Q= \frac{s_k^Q}{\|s_k^Q\|}$ and $\bar{g_k}$ be an orthonormal vector to $\bar{s}_k^Q$ (i.e., $\|\bar{g}_k \| =1$ and $\bar{g}_k^\top \bar{s}_k^Q=0$) such that
\begin{eqnarray} \label{eq:g_coordinate}
\frac{g_k}{\|g_k\|} = \cos(\varpi_k) \bar{s}_k^Q + \sin(\varpi_k) \bar{g_k}.
\end{eqnarray}

For a given $ \theta_k = \beta_k \|s^Q_k\|^2 ~~\mbox{where}~ \beta_k \in ]\beta_{\min}, \beta_{\max}[ ~~\mbox{and}~~  \beta_{\max}> \beta_{\min}>0,$ one would like to construct an SPD matrix $M_k$ such as $M_ks_k^Q= \frac{\theta_k g_k}{g_k^{\top}s_k^Q }$, hence
\begin{eqnarray*}
M_k\bar{s}_k^Q= \frac{\theta_k g_k}{g_k^{\top}s_k^Q \|s_k^Q\|}& =& \frac{\theta_k \|g_k\|}{g_k^{\top}s^Q_k \|s_k^Q\|} \left( \cos(\varpi_k) \bar{s}_k^Q + \sin(\varpi_k) \bar{g}_k  
\right)\\ 
&=& \beta_k \bar{s}_k^Q +\beta_k \tan (\varpi_k) \bar{g}_k. 
\end{eqnarray*}
Using the symmetric structure of the matrix $M_k$,  let $\gamma_k$ be a positive parameter such as
\begin{eqnarray*}
M_k= \left[\bar{s}_k^Q,   \bar{g}_k  \right]  N_k \left[\bar{s}_k^Q,   \bar{g}_k  \right]^{\top} \text{ where } N_k=
  \left[ {\begin{array}{cc}
   \beta_k & \beta_k \tan (\varpi_k) \\
   \beta_k \tan (\varpi_k)  & \gamma_k \\
  \end{array} } \right].
\end{eqnarray*}
The eigenvalues $ \lambda^{\min}_k$ and $\lambda^{\max}_k$ of the matrix $N_k$ are the roots of 
$$  \lambda^2 -  \left(\beta_k + \gamma_k \right) \lambda + \beta_k \gamma_k -   \left( \beta_k \tan (\varpi_k)   \right)^2=0,$$
hence 
\begin{eqnarray*}
 \lambda^{\min}_k = \frac{\left(\beta_k+ \gamma_k \right)-\sqrt{\vartheta_k}}{2}&\text{ and } &   
  \lambda^{\max}_k=\frac{\left(\beta_k + \gamma_k \right)+\sqrt{\vartheta_k}}{2}, 
\end{eqnarray*}
where $ {\vartheta_k} =  \left(\beta_k - \gamma_k \right)^2 + 4\left( \beta_k \tan (\varpi_k)   \right)^2.$ Note that both eigenvalues are monotonically increasing as functions of $\gamma_k$.

One may choose $\lambda^{\min}_k$ to be equal to $\frac{1}{2} \beta_k = \frac{1}{2} \frac{\theta_k}{\|s_k^Q\|^2} $, therefore  $\lambda^{\min}_k > \frac{1}{2}\beta_{\min}$ is uniformly bounded away from zero.  In this case, from the expression of $\lambda^{\min}_k$, we deduce that $\gamma_k =   2 \beta_k \tan(\varpi_k)^2 +\beta_k/2 $ and 
\begin{eqnarray*} \label{val:gamma}
\lambda^{\max}_k &= &\frac{3}{4} \beta_k + \beta_k \tan(\varpi_k)^2 +\sqrt{\frac{1}{16} \beta_k^2 + \frac{1}{2}\beta_k^2 \tan(\varpi_k)^2 + \beta_k^2 \tan(\varpi_k)^4} \\
 &=& \beta_k \left( \frac{3}{4} +  \tan(\varpi_k)^2 +\sqrt{\frac{1}{16}  + \frac{1}{2} \tan(\varpi_k)^2 + \tan(\varpi_k)^4}  \right)\\
 &=&  \beta_k \left( 1 +  2\tan(\varpi_k)^2\right).
\end{eqnarray*}
From Assumption \ref{asm:1}, i.e., $|g_k^{\top}s_k^Q| \ge \epsilon_d \|g_k \| \| s_k^Q\| $ where $\epsilon_d>0$, one has $\tan(\varpi_k)^2 \le  \frac{1- \epsilon_d^2}{\epsilon_d^2}.$ Hence, 
$$
\lambda^{\max}_k \le \beta_{\max} \left(  1 +  2\frac{1- \epsilon_d^2}{\epsilon_d^2} \right)
\le \frac{2 \beta_{\max}}{\epsilon_d^{2}} 
$$
A possible choice for the matrix $M_k$ can be obtained by completing the vectors family $\{\bar{s}_k^Q,\bar{g}_k \}$ to an orthonormal basis $\{\bar{s}_k^Q,\bar{g}_k, q_3, q_4, \ldots, q_n \}$ of $\real^n$ as follows:
$$
M_k =  [ \bar{s}_k^Q,\bar{g}_k, q_3, \ldots, q_n ]
  \left[ {\begin{array}{cc}
   N_k & 0 \\
   0 & D 
  \end{array} } \right]  [ \bar{s}_k^Q,\bar{g}_k, q_3, \ldots, q_n ]^{\top},
$$
where $D=\diag(d_3, \ldots, d_n) \in \real^{(n-2)\times (n-2)}$ with positive diagonal entrees independent from $k$ . 
One concludes that for all 
$ \theta_k = \beta_k \|s^Q_k\|^2 ~~\mbox{where}~ \beta_k \in ]\beta_{\min}, \beta_{\max}[ ~~\mbox{and}~~  \beta_{\max}> \beta_{\min}>0,$ the eigenvalue of the constructed $M_k$  are uniformly bounded away from zero and from above, hence 
the scaled $M_k$-norm is uniformly equivalent to the $\ell_2$-norm on $\real^n$ and for all $x \in \real^n$, one has
\begin{equation*} 
\frac{\sqrt{\beta_{\min}}}{\sqrt{2} } \|x\| \le \sqrt{\lambda^{\min}_k}  \|x\| \le \| x\|_{M_k} \le \sqrt{\lambda^{\max}_k}  \|x\| \le \frac{\sqrt{2 \beta_{\max} }}{\epsilon_d} \| x\|.
  \end{equation*}

By multiplying   $M_ks_k^Q= \frac{\theta_k}{g_k^{\top}s_k^Q }  g_k$ from both sides by $s_k^Q$, one gets $$\|s_k^Q\|_{M_k}^2 = \theta_k=\beta_k \|s^Q_k\|^2.$$

 Moreover, using (\ref{eq:g_coordinate}) and (\ref{val:gamma}), one has
 \begin{eqnarray*}
  \|g_k\|_{M_k}^2 &=& \|g_k\|^2 \left( \cos(\varpi_k) \bar{s}_k^Q + \sin(\varpi_k) \bar{g_k} \right)^{\top} \left(  \cos(\varpi_k) M_k\bar{s}_k^Q + \sin(\varpi_k) M_k\bar{g_k} \right)\\
  &=&  \|g_k\|^2 \left(   \frac{\theta_k \cos(\varpi_k)^2}{\|s_k^Q\|^2} + \gamma_k \sin(\varpi_k)^2 + 2 \sin(\varpi_k) \cos(\varpi_k)  \frac{\theta_k \tan(\varpi_k)^2}{\|s_k^Q\|^2}\right)\\
  &=&  \beta_k \|g_k\|^2  \left(   \cos(\varpi_k)^2 +\frac{5}{2} \sin(\varpi_k)^2 + 2 \sin(\varpi_k)^2 \tan(\varpi_k)^2 \right)\\
  &=&  \beta_k  \|g_k\|^2 \left( \frac{5}{2}  - \frac{3}{2} \cos(\varpi_k)^2 + 2 \left(\frac{1- \cos(\varpi_k)^2}{\cos(\varpi_k)}\right)^2 \right).
  \end{eqnarray*}
\end{proof}
 
A direct consequence of Theorem \ref{th:equivnm}  is that, by choosing $ \theta_k>0$ of the form $\beta_k \|s^Q_k\|^2$ where $\beta_k \in ]\beta_{\min}, \beta_{\max}[$ and $\beta_{\max}> \beta_{\min}>0$ during the application of LS-ARC algorithm, the global convergence and complexity bounds of LS-ARC algorithm can be derived straightforwardly from the classical ARC analysis \cite{CCartis_NIMGould_PhLToint_2011_b}. In fact, as far as the objective function $f$ is continuously differentiable, its gradient is Lipschitz continuous, and its approximated Hessian $B_k$ is bounded for all iterations (see \cite[Assumptions AF1, AF4, and AM1]{CCartis_NIMGould_PhLToint_2011_b}), the LS-ARC algorithm is globally convergent and will required at most a number of iterations of order $\epsilon^{-2}$ to produce a point $x_{\epsilon}$  with $\|\nabla f(x_{\epsilon})\| \le \epsilon$ \cite[Corollary 3.4]{CCartis_NIMGould_PhLToint_2011_b}. 
 
In what comes next, we assume the following on the objective function $f$:
\begin{assumption} \label{asm:f}
Assume that $f$ is twice  continuously differentiable with Lipschitz continuous Hessian, i.e., there exists a constant $L\ge 0$ such that for all $x, y\in \real^n$ one has
$$
\|\nabla^2 f(x) - \nabla^2 f(y)  \| \le L \|x -y \|.
$$
\end{assumption}
When the matrix $B_k$ is set to be equal to the exact Hessian of the problem and under Assumption \ref{asm:f}, one can improve the function-evaluation complexity to be  $\mathcal{O}(\epsilon^{-3/2})$ for ARC algorithm by imposing, in addition to the Cauchy decrease, another termination condition during the computation of the trial step $s_k$ (see  \cite{CCartis_NIMGould_PhLToint_2011_b,Birgin2016}). Such condition is of the form
\begin{eqnarray}
\label{TC_s}
\|\nabla m_k(s_k) \| & \le & \zeta \|s_k \|^2,
\end{eqnarray}
where $\zeta> 0$ is a given constant chosen at the start of the algorithm. 

When only an approximation of the Hessian is available during the application Algorithm \ref{algo:ARC}, an additional condition has to be imposed on the Hessian approximation $B_k$  in order to ensure an optimal complexity of order $\epsilon^{-3/2}$. Such condition is often considered as (see \cite[Assumption AM.4]{CCartis_NIMGould_PhLToint_2011_b}):
\begin{assumption}\label{asm:B} The matrix $B_k$ approximate the Hessian $\nabla^2 f(x_k)$ in the sense that
\begin{eqnarray} \label{strong:Dennis-more:approxiation}
\|(\nabla^2 f(x_k) -B_k )s_k\| \le C \|s_k\|^2
\end{eqnarray}
for all $k\ge 0$ and for some constant $C>0$. 
\end{assumption}

Similarly, for LS-ARC algorithm, the complexity bound can be improved to be of the order of $\epsilon^{-3/2}$ if one includes the two following requirements (a) the $s_{k}$ satisfies the criterion condition~(\ref{TC_s}) and (b) the Hessian approximation matrix $B_k$ has to satisfy Assumption \ref{asm:B}. When our proposed $M_k$-norm is used, the termination condition (\ref{TC_s}) imposed on the cubic model $m_k$ can be expressed only in terms of $s_k^Q$ and $\nabla m_k^Q$. The latter condition will be required in the LS-ARC algorithm at each iteration to ensure that it takes at most $\mathcal{O}(\epsilon^{-3/2})$  iterations to reduce the gradient norm below $\epsilon$. Such result is given in the following proposition 
\begin{proposition} 
\label{lm1:wcc} Let Assumption \ref{asm:1}  hold. Let $ \theta_k = \beta_k \|s^Q_k\|^2$ where $\beta_k \in ]\beta_{\min}, \beta_{\max}[$ and $\beta_{\max}> \beta_{\min}>0$. Then imposing the condition (\ref{TC_s}) in Algorithm \ref{algo:LS-ARC} is equivalent to the following condition 
\begin{eqnarray}
\label{TC_s_arc_en}
\|\nabla m_k^Q(s_k^Q) \| & \le & 
\frac{2 \sign(g_k^{\top}s_k^Q) \zeta  }{-1+\sign(g_k^{\top}s_k^Q) \sqrt{1 +4 \frac{\sigma_k \theta_k^{3/2}}{|g_k^{\top}s^Q_k|} }} \|s_k^Q\|^2.
\end{eqnarray}
\end{proposition}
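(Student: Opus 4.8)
The plan is to turn the termination test (\ref{TC_s}), evaluated at the actual step $s_k=\delta_k s_k^Q$ of Algorithm~\ref{algo:LS-ARC}, into a test on $\nabla m_k^Q(s_k^Q)$ alone, by exploiting the collinearity $s_k=\delta_k s_k^Q$ together with the two identities $M_k s_k^Q=\frac{\theta_k}{g_k^\top s_k^Q}g_k$ and $\|s_k^Q\|_{M_k}^2=\theta_k$ furnished by Theorem~\ref{th:equivnm} (which hold both when $s_k^Q$ solves $B_ks=-g_k$ exactly and when it is the stationary point of $m_k^Q$ over a subspace).

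First I would evaluate $\nabla m_k(s)=g_k+B_ks+\sigma_k\|s\|_{M_k}M_ks$ at $s_k=\delta_k s_k^Q$. Writing $B_k s_k^Q=\nabla m_k^Q(s_k^Q)-g_k$ and substituting the two identities above (so that $M_k s_k^Q$ becomes a multiple of $g_k$ and $\|s_k^Q\|_{M_k}^3=\theta_k^{3/2}$), the result is a linear combination of $g_k$ and $\nabla m_k^Q(s_k^Q)$ whose $g_k$-coefficient equals $1-\delta_k+\sigma_k\frac{\theta_k^{3/2}}{g_k^\top s_k^Q}|\delta_k|\delta_k$. The key observation is that $\delta_k$, as defined by (\ref{eqdeltaarc})/(\ref{eq:deltasigma:1}), is exactly a root of $g_k^\top s_k^Q-(g_k^\top s_k^Q)\delta+\sigma_k\|s_k^Q\|_{M_k}^3|\delta|\delta=0$ — the same equation already obtained from (\ref{equ:n:1})--(\ref{equ:n:2}) and used in the proof of Theorem~\ref{th:2} — so dividing that relation by $g_k^\top s_k^Q\neq 0$ shows the $g_k$-coefficient vanishes. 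Hence $\nabla m_k(s_k)=\delta_k\,\nabla m_k^Q(s_k^Q)$.

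It then remains to take Euclidean norms: $\|\nabla m_k(s_k)\|=|\delta_k|\,\|\nabla m_k^Q(s_k^Q)\|$ and $\|s_k\|^2=\delta_k^2\|s_k^Q\|^2$, so (\ref{TC_s}) is equivalent to $\|\nabla m_k^Q(s_k^Q)\|\le\zeta|\delta_k|\,\|s_k^Q\|^2$ after dividing by $|\delta_k|>0$. Finally I would rewrite $|\delta_k|$: since the denominator of (\ref{eqdeltaarc}) is negative when $g_k^\top s_k^Q>0$ and positive when $g_k^\top s_k^Q<0$, one has $\sign(\delta_k)=-\sign(g_k^\top s_k^Q)$, and a two-case check (using $\|s_k^Q\|_{M_k}^3=\theta_k^{3/2}$) gives $|\delta_k|=\frac{2\sign(g_k^\top s_k^Q)}{-1+\sign(g_k^\top s_k^Q)\sqrt{1+4\sigma_k\theta_k^{3/2}/|g_k^\top s_k^Q|}}$, which is exactly the multiplier appearing in (\ref{TC_s_arc_en}). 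The computation is essentially routine; the two places that need attention are the cancellation of the $g_k$-term (which rests entirely on $\delta_k$ solving the stationarity equation) and the sign bookkeeping when passing from $\delta_k$ to $|\delta_k|$, so that the $\sign(g_k^\top s_k^Q)$ factors in (\ref{TC_s_arc_en}) come out correctly; note also that because the identities of Theorem~\ref{th:equivnm} hold verbatim in the subspace case, no separate argument for the Krylov variant is required.
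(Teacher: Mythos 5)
Your proposal is correct and follows essentially the same route as the paper's proof: evaluate $\nabla m_k$ at $s_k=\delta_k s_k^Q$, use $M_k s_k^Q=\frac{\theta_k}{g_k^{\top}s_k^Q}g_k$ together with the stationarity equation $g_k^{\top}s_k^Q-(g_k^{\top}s_k^Q)\delta_k+\sigma_k\|s_k^Q\|_{M_k}^3|\delta_k|\delta_k=0$ to obtain $\nabla m_k(s_k)=\delta_k\nabla m_k^Q(s_k^Q)$, and then take norms. Your explicit two-case verification that $|\delta_k|$ equals the multiplier in (\ref{TC_s_arc_en}) is a detail the paper leaves implicit, and it checks out.
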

\begin{proof} Since Assumption \ref{asm:1} holds and $ \theta_k = \beta_k \|s^Q_k\|^2$ as in (\ref{I_def}), Theorem \ref{th:equivnm} implies the existence of an SPD matrix $M_k$ such that $M_ks_k^Q=\frac{\theta_k}{g_k^{\top}s_k^Q}g_k $. Using such $M_k$-norm, an approximate solution of the cubic model $m_k$ is of the form $s_k= \delta_k s^{Q}_k$ where $\delta_k$ is solution of $
g_k^{\top}s^{Q}_k  -  (g_k^{\top}s_k^Q)\delta_k +\sigma_k \| s_k^Q \|^3_{M_k} |\delta_k| \delta_k =0$. Hence,
\begin{eqnarray*}
\nabla m_k(s_k) & = & g_k + B_k s_k+ \sigma_k \|s_k \|_{M_k} M_k s_k  \\
                         &=& g_k + \delta_k B_k s^Q_k +  \sigma_k   |\delta_k| \delta_k   \| s_k^Q \|_{M_k} M_k s^Q_k.     \end{eqnarray*}
Since $M_ks_k^Q=\frac{\theta_k}{g_k^{\top}s_k^Q}g_k $ with $\theta_k=\|s_k^Q\|_{M_k}^2$, one has
          \begin{eqnarray*}                                    
     \nabla m_k(s_k)                             & =&  g _k+ \delta_k B_k s_k^Q  +  \frac{\sigma_k   |\delta_k| \delta_k   \| s_k^Q \|_{M_k}^3}{g_k^{\top}s_k^Q} g_k \\
     &=& \left(1 + \frac{\sigma_k   |\delta_k| \delta_k   \| s_k^Q \|_{M_k}^3}{g_k^{\top}s_k^Q}  \right) g_k + \delta_k B_k s_k^Q .
\end{eqnarray*}
From the fact that
$
g_k^{\top}s^{Q}_k  -  (g_k^{\top}s_k^Q)\delta_k +\sigma_k \| s_k^Q \|^3_{M_k} |\delta_k| \delta_k =0$, one deduces
     \begin{eqnarray*}                                    
     \nabla m_k(s_k)                             & =& \delta_k \left(g_k +  B_k s_k^{Q} \right)=  \delta_k  \nabla m_k^Q(s_k^{Q}).
\end{eqnarray*}
Hence, the condition (\ref{TC_s}) is being equivalent to 
\begin{eqnarray*}
\|\nabla m_k^Q(s_k^{Q}) \| & \le & \frac{ \zeta}{\delta_k}  \|s_k \|^2 = \zeta |\delta_k| \|s_k^Q\|^2.
\end{eqnarray*}
\end{proof}
 
We note that the use of an exact solver to compute $s^Q_k$ implies that the condition (\ref{TC_s_arc_en}) will be automatically satisfied for a such iteration.
Moreover, when a subspace method is used to approximate the step $s^Q_k$, we note the important freedom to add an additional preconditioner to the problem. In this case, one would solve the quadratic problem with  preconditioning until  the criterion~(\ref{TC_s_arc_en}) is met. This is expected to happen early along the Krylov iterations when the preconditioner for the linear system $B_ks=-g_k$ is good enough.

 Algorithm \ref{algo:LS-ARC:s} summarized a second-order variant of LS-ARC, referred here as $\mbox{LS-ARC}_{\mbox{(s)}}$, which is guaranteed to have an improved iteration worst-case complexity of order $\epsilon^{-3/2}$ (see Theorem \ref{th:4-ters}).
 \noindent
\begin{algorithm}[!ht]
\DontPrintSemicolon
\SetAlgoNlRelativeSize{0}
\caption{\bf $\mbox{LS-ARC}_{\mbox{(s)}}$ Algorithm.}
\label{algo:LS-ARC:s}
\begin{rm}
\begin{description}
\item
In each iteration $k$ of Algorithm \ref{algo:LS-ARC}: 

Let $s^Q_k$ be an approximate solution of $B_k s=-g_k$ such as $|g_k^\top s^Q_k| \ge  \epsilon_d  \|g_k\|\|s^Q_k\|$ and the termination condition
\begin{eqnarray*}
\|\nabla m_k^Q(s_k^Q) \| & \le & 
\frac{2 \sign(g_k^{\top}s_k^Q) \zeta  }{-1+\sign(g_k^{\top}s_k^Q) \sqrt{1 +4 \frac{\sigma_k \beta_k^{3/2} \|s^Q_k\|^3}{|g_k^{\top}s^Q_k|} }} \|s_k^Q\|^2
\end{eqnarray*}
is satisfied for a given constant $\zeta >0$ chosen at the beginning of the algorithm.
\end{description}
\end{rm}
\end{algorithm} 

The Hessian approximation $B_k$ (as required by Assumption \ref{asm:B}) involves $s_k$, hence finding a new matrix $B_k$ so that the new $s_k^Q$ satisfies Assumption \ref{asm:1} using regularization techniques is not trivial as $s_k$ is unknown at this stage. 
A possible way to satisfy Assumption \ref{asm:B} without modifying $B_k$ is by choosing $s^Q_k$ as the first iterate to satisfy  (\ref{TC_s_arc_en}) when using a subspace method to solve the linear system $B_k s=-g_k$. Then, one checks if $s^Q_k$ satisfies Assumption \ref{asm:1} or not. If $s^Q_k$ violates the latter assumption, one runs further iterations of the subspace method until Assumption \ref{asm:1} will be satisfied. If the subspace method ends and Assumption \ref{asm:1} is still violated, one would restore such assumption by minimizing the cubic model using the $\ell_2$-norm until a successful outer iteration is found. 

For the sake of illustration, consider the minimization of the objective function $f(x,y) = x^2 - y^2$ for all $(x,y) \in \real^2$, starting from $x_0=(1,1)$, with $\sigma_0=1$, and $B_k$ being the exact Hessian of the problem during the application of the algorithm. One starts by checking if $s^Q_0 = (-1,-1)$ (i.e., the exact solution of the linear system $B_0 s = - g_0$) is a sufficient descent direction or not. Since the slop $g_0^\top s^Q_0$ is equal to zero for this example, the algorithm has to switch to the $\ell_2$-norm and thus $s_0$ will be set as a minimizer of the cubic model with the $\ell_2$-norm to define the cubic regularization term.  
Using a subproblem solver (in our case the \texttt{GLRT} solver from \texttt{GALAHAD} \cite{NIMGould_DOrban_PhLToint_2003}, more details are given in Section 6), one finds the step $s_0 = (-0.4220,  2.7063)$ and the point $x_1 = x_0 + s_0$ is accepted (with $f(x_1)=-13.4027$). 
Computing the new gradient $g_1 = (1.1559, -7.4126)$ and the quasi-Newton direction $s^Q_1 = (-0.5780, -3.7063)$, one has  $|g_1^\top s^Q_1| = 20.5485 \ge  \epsilon_d \|g_1\|\|s_1^Q\|=0.0205$ where $\epsilon_d=10^{-3}$ (hence Assumption 3.1 holds). We perform then our proposed LS strategy along the direction $s_1^Q$ to obtain the step $s_1$.
For this example, except the first one, all the remaining iterations $k$ satisfy the condition $|g_k^{\top} s^Q_k| \ge  \epsilon_d\|g_k\| \|s_k^Q\|$. 
We note that the regarded minimization problem is unbounded from below, hence $f$ decreases to $-\infty$ during the application of the algorithm.

LS strategies require in general to have a sufficient descent direction for each iteration, it seems then natural that one may need to choose $\epsilon_d$ to be large (close to 1) to target good performance. However, during the application of $\mbox{LS-ARC}_{\mbox{(s)}}$ and to satisfy Assumption \ref{asm:1}  (without modifying the matrix $B_k$), one may be encouraged to use an $\epsilon_d$ small. In what comes next, we will give a detailed complexity analysis of the $\mbox{LS-ARC}_{\mbox{(s)}}$ algorithm in this case. In particular, we will explicit how the complexity bound will depend on the choice of the constant $\epsilon_d$.
The following results are obtained from \cite[Lemma 2.1]{Birgin2016} and \cite[Lemma 2.2]{Birgin2016}:
\begin{lemma} 
Let Assumptions \ref{asm:f} and \ref{asm:1} hold and consider Algorithm \ref{algo:LS-ARC:s}. 
Then for all $k\ge 0$, one has
\begin{eqnarray} \label{decrease:cubic}
f(x_k) - m^Q_k(s_k) & \ge&\frac{\sigma_k}{3} \|s_k\|^3_{M_k}, 
\end{eqnarray}
and 
\begin{eqnarray} \label{sigma:max}
\sigma_k & \le& \sigma_{\max} := \max \left\{ \sigma_0, \frac{3 \nu_2 L}{2 (1-\eta) } \right\}.
\end{eqnarray}
\end{lemma}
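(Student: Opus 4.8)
The plan is to establish the two inequalities separately, following the standard ARC analysis of Birgin et al.\ but adapted to the scaled $M_k$-norm. For the cubic decrease bound~(\ref{decrease:cubic}), the idea is to recall from Theorem~\ref{cor:1} and the preceding discussion that, for the value of $\sigma_k$ actually used in Algorithm~\ref{algo:LS-ARC:s}, the step $s_k = \delta_k s_k^Q$ is a stationary point of the cubic model $m_k$ with $\nabla m_k(s_k)=0$ in the relevant subspace; equivalently $(B_k + \lambda_k M_k)s_k = -g_k$ with $\lambda_k = \sigma_k\|s_k\|_{M_k}$. Taking the inner product of this identity with $s_k$ gives $g_k^\top s_k + s_k^\top B_k s_k + \sigma_k\|s_k\|_{M_k}^3 = 0$, so that
\[
f(x_k) - m_k^Q(s_k) = -g_k^\top s_k - \tfrac12 s_k^\top B_k s_k
= \tfrac12 g_k^\top s_k + \sigma_k \|s_k\|_{M_k}^3 - \tfrac12\sigma_k\|s_k\|_{M_k}^3 + \tfrac12\sigma_k\|s_k\|_{M_k}^3,
\]
which after regrouping reduces to $f(x_k)-m_k^Q(s_k) = \tfrac12 g_k^\top s_k + \tfrac12 \sigma_k \|s_k\|_{M_k}^3$; combining this with the sign information on $g_k^\top s_k$ (it has the same sign as $g_k^\top s_k^Q$ times $\delta_k$, which one checks is nonnegative from the explicit formula~(\ref{eq:deltasigma:1})) yields $f(x_k)-m_k^Q(s_k)\ge \tfrac13\sigma_k\|s_k\|_{M_k}^3$. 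Alternatively, and more robustly, one invokes \cite[Lemma~2.1]{Birgin2016} directly, observing that its proof only uses the global-minimizer characterization of $s_k$ for the regularized model, which holds here over $\mathcal{L}_k$ by Theorem~\ref{cor:1}.

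For the upper bound~(\ref{sigma:max}) on $\sigma_k$, the argument is the classical one: I would show that whenever $\sigma_k$ is large enough, the iteration (or the inner while-loop of Algorithm~\ref{algo:LS-ARC}) must terminate successfully, i.e.\ $\rho_k \ge \eta$. Using Assumption~\ref{asm:f} (Lipschitz Hessian with constant $L$) and a Taylor expansion, one bounds
\[
f(x_k+s_k) - m_k^Q(s_k) \le \tfrac{L}{6}\|s_k\|^3 + \tfrac12 \|(\nabla^2 f(x_k)-B_k)s_k\|\,\|s_k\|,
\]
and under Assumption~\ref{asm:B} the second term is also $O(\|s_k\|^3)$; then combining with the cubic decrease~(\ref{decrease:cubic}) and the norm equivalence from Theorem~\ref{th:equivnm} gives $1-\rho_k \le \frac{c L}{\sigma_k}$ for a constant $c$, so $\rho_k\ge\eta$ as soon as $\sigma_k \ge \frac{cL}{1-\eta}$. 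Since $\sigma_k$ is only ever increased by the factor $\nu_2$ on unsuccessful steps and never below $\sigma_{\min}$ on successful ones, the standard induction gives $\sigma_k \le \max\{\sigma_0,\ \frac{3\nu_2 L}{2(1-\eta)}\}$. This is essentially \cite[Lemma~2.2]{Birgin2016}.

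The main obstacle is making sure the scaled-norm bookkeeping does not introduce a spurious dependence on $\epsilon_d$ or on $\beta_{\max}/\beta_{\min}$ into the constant $\sigma_{\max}$. The delicate point is that in~(\ref{decrease:cubic}) the decrease is measured in the $M_k$-norm while the Taylor remainder is naturally in the $\ell_2$-norm, so one must pass between the two using~(\ref{eq:norms}); the claimed bound $\sigma_{\max}=\max\{\sigma_0, 3\nu_2 L/(2(1-\eta))\}$ has exactly the ARC form, which suggests that in the relevant estimate the $M_k$-norm factors cancel — this happens because both $f(x_k)-m_k^Q(s_k)$ and the bound on $f(x_k+s_k)-m_k^Q(s_k)$ can be expressed through $\sigma_k\|s_k\|_{M_k}^3$ once one uses the stationarity identity for $s_k$, so that $\|s_k\|_{M_k}$ never has to be compared with $\|s_k\|$. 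I would verify this cancellation carefully; everything else is routine and inherited verbatim from \cite{Birgin2016}.
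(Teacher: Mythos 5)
Your overall route is the same as the paper's, which simply imports \cite[Lemmas~2.1 and~2.2]{Birgin2016}; the substance of your reconstruction is sound, but two points deserve correction. For (\ref{decrease:cubic}), the stationarity detour is unnecessary: the Cauchy condition in (\ref{sdcond:arc}) gives $m_k(s_k)\le m_k(-\delta_k^{\cc}g_k)\le m_k(0)=f(x_k)$, and unfolding $m_k(s_k)=m_k^Q(s_k)+\tfrac{\sigma_k}{3}\|s_k\|_{M_k}^3$ yields the bound in one line — this is exactly Birgin's Lemma~2.1 and needs no global-minimizer property. Your stationarity argument also works, but the displayed identity has a sign slip: contracting $g_k^\top s_k+s_k^\top B_k s_k+\sigma_k\|s_k\|_{M_k}^3=0$ with $f(x_k)-m_k^Q(s_k)=-g_k^\top s_k-\tfrac12 s_k^\top B_k s_k$ gives $f(x_k)-m_k^Q(s_k)=-\tfrac12 g_k^\top s_k+\tfrac12\sigma_k\|s_k\|_{M_k}^3$ (not $+\tfrac12 g_k^\top s_k$), and $g_k^\top s_k=\delta_k\,g_k^\top s_k^Q$ is \emph{nonpositive} (not nonnegative) by (\ref{eq:deltasigma:1}); the two sign errors cancel, so your conclusion stands, and in fact with constant $\tfrac12$ rather than $\tfrac13$.

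For (\ref{sigma:max}) you have correctly identified the delicate point, but the cancellation you hope for does not occur. The Taylor remainder under Assumption~\ref{asm:f} is intrinsically an $\ell_2$-norm quantity, $|f(x_k+s_k)-m_k^Q(s_k)|\le\tfrac{L}{2}\|s_k\|^3$ (plus $\tfrac{C}{2}\|s_k\|^3$ unless $B_k=\nabla^2 f(x_k)$ — note the lemma's hypotheses do not include Assumption~\ref{asm:B}, so strictly one must assume the exact Hessian here, as Birgin's Lemma~2.2 does), while the denominator $f(x_k)-m_k^Q(s_k)\ge\tfrac{\sigma_k}{3}\|s_k\|_{M_k}^3$ is in the $M_k$-norm. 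Passing between them via (\ref{eq:norms}) gives $1-\rho_k\le\tfrac{3L}{2\sigma_k}\bigl(\tfrac{2}{\beta_{\min}}\bigr)^{3/2}$, so a rigorous threshold for guaranteed success is $\tfrac{3L}{2(1-\eta)}(2/\beta_{\min})^{3/2}$ and the resulting $\sigma_{\max}$ carries a $\beta_{\min}^{-3/2}$ factor that the paper's statement (a verbatim transcription of the Euclidean-norm constant) omits. You should either carry this factor explicitly or note that (\ref{sigma:max}) holds as written only up to this norm-equivalence constant; the mechanism of the induction ($\sigma$ multiplied by $\nu_2$ only on unsuccessful steps, never dropping below $\sigma_{\min}$) is otherwise exactly as you describe.
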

 The next lemma is an adaptation of \cite[Lemma 2.3]{Birgin2016} when the proposed $M_k$-norm is used in the ARC framework.
\begin{lemma}
Let Assumptions \ref{asm:f},  \ref{asm:B} and \ref{asm:1} hold. Consider Algorithm \ref{algo:LS-ARC:s} with $ \theta_k = \beta_k \|s^Q_k\|^2$ where $\beta_k \in ]\beta_{\min}, \beta_{\max}[$ and $\beta_{\max}> \beta_{\min}>0$. 
Then for all $k\ge 0$
\begin{eqnarray*} \label{eq:g_s}
\| s_k\| & \ge & \left( \frac{\|g_{k+1}\|}{L+ C+2\sqrt{2} \sigma_{\max} \beta_{\max}^{3/2}\epsilon_d^{-3} + \zeta }\right)^{\frac{1}{2}}.
\end{eqnarray*}
\end{lemma}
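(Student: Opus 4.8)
The plan is to mimic the argument of \cite[Lemma 2.3]{Birgin2016}, adapting it to account for the scaled $M_k$-norm and the approximation $B_k$ of the Hessian. The starting point is the termination condition~(\ref{TC_s}), which holds in Algorithm~\ref{algo:LS-ARC:s}, namely $\|\nabla m_k(s_k)\| \le \zeta\|s_k\|^2$. Writing out $\nabla m_k(s_k) = g_k + B_k s_k + \sigma_k \|s_k\|_{M_k} M_k s_k$ and using the optimality structure, I would relate $g_{k+1}=\nabla f(x_{k+1})$ to $\nabla m_k(s_k)$ via a Taylor expansion of $\nabla f$ around $x_k$: one has
\[
g_{k+1} = g_k + \nabla^2 f(x_k) s_k + \big(\nabla f(x_k+s_k) - \nabla f(x_k) - \nabla^2 f(x_k)s_k\big),
\]
so that
\[
g_{k+1} = \nabla m_k(s_k) + \big(\nabla^2 f(x_k) - B_k\big)s_k + \big(\nabla f(x_k+s_k)-\nabla f(x_k)-\nabla^2 f(x_k)s_k\big) - \sigma_k \|s_k\|_{M_k} M_k s_k.
\]

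Next I would bound each of the four terms in $\ell_2$-norm. The first is $\le \zeta\|s_k\|^2$ by~(\ref{TC_s}). The second is $\le C\|s_k\|^2$ by Assumption~\ref{asm:B}. The third is the standard Lipschitz-Hessian remainder, bounded by $\tfrac{L}{2}\|s_k\|^2$ under Assumption~\ref{asm:f} (actually $\le \tfrac{L}{2}\|s_k\|^2$; a crude bound $L\|s_k\|^2$ also suffices). The fourth term is where the scaled norm enters: $\|\sigma_k \|s_k\|_{M_k} M_k s_k\| \le \sigma_{\max} \|s_k\|_{M_k}\, \|M_k s_k\|$, and then I would control both $\|s_k\|_{M_k}$ and $\|M_k s_k\|$ using the norm-equivalence bounds from Theorem~\ref{th:equivnm}(ii), $\| x\|_{M_k} \le \frac{\sqrt{2\beta_{\max}}}{\epsilon_d}\|x\|$, together with the operator-norm bound on $M_k$ implied by its largest eigenvalue $\lambda_k^{\max}\le \frac{2\beta_{\max}}{\epsilon_d^2}$ established in the proof of that theorem. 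This gives $\|\sigma_k \|s_k\|_{M_k} M_k s_k\| \le \sigma_{\max}\cdot\frac{\sqrt{2\beta_{\max}}}{\epsilon_d}\cdot\frac{2\beta_{\max}}{\epsilon_d^2}\,\|s_k\|^2 = 2\sqrt{2}\,\sigma_{\max}\beta_{\max}^{3/2}\epsilon_d^{-3}\|s_k\|^2$, which is precisely the term appearing in the claimed denominator.

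Combining via the triangle inequality yields
\[
\|g_{k+1}\| \le \big(L + C + 2\sqrt{2}\,\sigma_{\max}\beta_{\max}^{3/2}\epsilon_d^{-3} + \zeta\big)\|s_k\|^2,
\]
and rearranging gives the stated lower bound on $\|s_k\|$. I expect the only delicate point to be the bookkeeping on the regularization term: one must be careful to separate the two distinct appearances of $M_k$ — one inside the $M_k$-norm of $s_k$ (bounded by norm equivalence) and one as the matrix multiplying $s_k$ (bounded by the spectral radius of $M_k$) — and to check that the explicit eigenvalue bounds from Theorem~\ref{th:equivnm} are what produce the clean constant $2\sqrt{2}\,\beta_{\max}^{3/2}\epsilon_d^{-3}$. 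Everything else is the routine Taylor-remainder estimate already present in \cite{Birgin2016}; the use of $\sigma_k\le\sigma_{\max}$ is supplied by the preceding lemma~(\ref{sigma:max}).
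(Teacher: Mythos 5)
Your proposal is correct and follows essentially the same route as the paper: bound $\|g_{k+1}\|$ by the triangle inequality using the termination condition (\ref{TC_s}), Assumption \ref{asm:B}, the Lipschitz-Hessian Taylor remainder, and the bound $\sigma_k\|s_k\|_{M_k}\|M_k s_k\|\le\sigma_{\max}\|M_k\|^{3/2}\|s_k\|^2$ with $\|M_k\|\le 2\beta_{\max}\epsilon_d^{-2}$ from Theorem \ref{th:equivnm}. The only difference is presentational (you write the decomposition as an explicit identity before applying the triangle inequality, while the paper chains the inequalities directly), and your handling of the regularization term reproduces the constant $2\sqrt{2}\,\sigma_{\max}\beta_{\max}^{3/2}\epsilon_d^{-3}$ exactly as in the paper.
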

\begin{proof}Indeed, using Assumptions \ref{asm:f} and \ref{asm:B} within Taylor expansion, one has
\begin{eqnarray*}
\| g_{k+1}\| & \le & \| g_{k+1} - \nabla m_k (s_k) \| + \| \nabla m_k (s_k) \| \\
& \le & \|g_{k+1} - g_k -B_k s_k - \sigma_k \|s_k\|_{M_k} M_k s_k \| + \zeta \|  s_k\|^2 \\
& \le & \|g_{k+1} - g_k - \nabla^2 f(x_k) s_k\| + \| (\nabla^2 f(x_k)-B_k) s_k \| + \sigma_k \|M_k\|^{3/2} \|s_k\|^2 + \zeta \|  s_k\|^2\\
& \le & L\| s_k \|^2 +  C \| s_k \|^2+(\sigma_{k} \|M_k\|^{3/2} +\zeta) \| s_k \|^2.
\end{eqnarray*}
Using (\ref{sigma:max}), one has
\begin{eqnarray*}
\| g_{k+1}\|& \le & (L +  C +\sigma_{\max} \|M_k\|^{3/2} +\zeta) \| s_k \|^2.
\end{eqnarray*}
Since Assumption \ref{asm:B} holds and $ \theta_k = \beta_k \|s^Q_k\|^2$ where $\beta_k \in ]\beta_{\min}, \beta_{\max}[$, then using Theorem \ref{th:equivnm}, the matrix $M_k$ norm is bounded from above by $2\beta_{\max} \epsilon_d^{-2}$. Hence,
\begin{eqnarray*} 
\| g_{k+1}\| & \le & \left(L+ C+ 2\sqrt{2} \sigma_{\max} \beta_{\max}^{3/2} \epsilon_d^{-3} +\zeta \right) \| s_k \|^2.
\end{eqnarray*}
\end{proof}
\begin{theorem} \label{th:4-ters} 
Let Assumptions \ref{asm:f},  \ref{asm:B} and \ref{asm:1} hold. Consider Algorithm \ref{algo:LS-ARC:s} with $ \theta_k = \beta_k \|s^Q_k\|^2$ where $\beta_k \in ]\beta_{\min}, \beta_{\max}[$ and $\beta_{\max}> \beta_{\min}>0$. 
Then, given an $\epsilon>0$, Algorithm \ref{algo:LS-ARC:s} needs at most 
$$
\left \lfloor \kappa_{\mbox{s}}(\epsilon_d) \frac{f(x_0) - f_{\mbox{low}}}{\epsilon^{3/2}} \right \rfloor
$$
iterations to produce an iterate $x_{\epsilon}$ such that $\|\nabla f(x_{\epsilon})  \| \le \epsilon$ where $f_{\mbox{low}}$ is a lower bound on $f$ and $\kappa_{\mbox{s}}(\epsilon_d)$ is given by 
\begin{eqnarray*}
\kappa_{\mbox{s}}(\epsilon_d) & =& \frac{6\sqrt{2}\left(L+ C+ 2\sqrt{2} \sigma_{\max} \beta_{\max}^{3/2} \epsilon_d^{-3} + \zeta \right)^{3/2}}{\eta \sigma_{\min}\beta_{\min}^{3/2}}.
\end{eqnarray*}
 \end{theorem}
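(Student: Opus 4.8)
The plan is to follow the classical decrease-and-telescope complexity argument of ARC \cite{CCartis_NIMGould_PhLToint_2011_b,Birgin2016}, now carried out with the scaled $M_k$-norm in place of the Euclidean one, using the two lemmas stated just above (which in turn rest on Proposition \ref{lm1:wcc}) and Theorem \ref{th:equivnm} as black boxes. The structural fact that makes this clean is that in Algorithm \ref{algo:LS-ARC:s} (as in Algorithm \ref{algo:LS-ARC}) every outer iteration $k$ accepts its trial step $s_k=\delta_k s^Q_k$: the inner loop merely inflates $\sigma_k$ by the factor $\nu_2$ until the acceptance test (\ref{sdcond:arc}) holds, and by the standard Taylor argument underlying (\ref{sigma:max}) that test is met once $\sigma_k$ is large enough, so the inner loop terminates after finitely many inflations and $\sigma_k\le\sigma_{\max}$. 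There are therefore no unsuccessful outer iterations to count separately --- every iteration genuinely reduces $f$ --- and the proof reduces to lower-bounding that reduction and summing it.

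Next I would quantify the per-iteration reduction by chaining four estimates:
\begin{equation*}
f(x_k)-f(x_{k+1})\;\ge\;\eta\bigl(f(x_k)-m^Q_k(s_k)\bigr)\;\ge\;\frac{\eta\,\sigma_k}{3}\,\|s_k\|_{M_k}^{3}\;\ge\;\frac{\eta\,\sigma_{\min}\,\beta_{\min}^{3/2}}{6\sqrt{2}}\,\|s_k\|^{3}\;\ge\;\frac{\|g_{k+1}\|^{3/2}}{\kappa_{\mbox{s}}(\epsilon_d)}.
\end{equation*}
The first inequality is $\rho_k\ge\eta$ applied to the definition (\ref{rho}) of $\rho_k$ (its denominator $f(x_k)-m^Q_k(s_k)$ is positive by (\ref{decrease:cubic})); the second is exactly (\ref{decrease:cubic}); the third combines $\sigma_k\ge\sigma_{\min}$ --- true because $\sigma_k=\max\{\nu_1\sigma_{k-1},\sigma_{\min}\}$ at the start of iteration $k$ and the inner loop only increases it --- with the lower bound $\|s_k\|_{M_k}\ge(\beta_{\min}/2)^{1/2}\|s_k\|$ from (\ref{eq:norms}) of Theorem \ref{th:equivnm}; and the fourth substitutes the step-size bound $\|s_k\|\ge\bigl(\|g_{k+1}\|/(L+C+2\sqrt{2}\sigma_{\max}\beta_{\max}^{3/2}\epsilon_d^{-3}+\zeta)\bigr)^{1/2}$ of the last lemma, the resulting constant $\kappa_{\mbox{s}}(\epsilon_d)=6\sqrt{2}(L+C+2\sqrt{2}\sigma_{\max}\beta_{\max}^{3/2}\epsilon_d^{-3}+\zeta)^{3/2}/(\eta\sigma_{\min}\beta_{\min}^{3/2})$ being precisely the one claimed.

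Finally I would telescope. As long as no $\epsilon$-stationary iterate has been produced, every performed iteration $k$ satisfies $\|g_{k+1}\|>\epsilon$; summing the chain above over all such iterations and bounding the cumulative decrease by $f(x_0)-f_{\mbox{low}}$ (valid since $f$ is bounded below) forces their number to be strictly less than $\kappa_{\mbox{s}}(\epsilon_d)(f(x_0)-f_{\mbox{low}})\epsilon^{-3/2}$, hence at most $\lfloor\kappa_{\mbox{s}}(\epsilon_d)(f(x_0)-f_{\mbox{low}})\epsilon^{-3/2}\rfloor$ since this count is a nonnegative integer; consequently an iterate $x_\epsilon$ with $\|\nabla f(x_\epsilon)\|\le\epsilon$ is reached within that many iterations.

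I do not anticipate a genuine obstacle: the analytic substance --- the Lipschitz/Taylor estimates and the uniform bound $\sigma_k\le\sigma_{\max}$ --- is supplied entirely by the two preceding lemmas, so the remaining work is bookkeeping rather than new ideas. The points deserving care are the exponents when passing from $\|\cdot\|_{M_k}$ to $\|\cdot\|$ through (\ref{eq:norms}), the verification that $\sigma_{\min}$ is a legitimate uniform lower bound on the regularization parameter actually used at each accepted step, and the observation that Assumption \ref{asm:1} is in force at every iteration by hypothesis, so that the matrix $M_k$ --- and hence (\ref{eq:norms}) together with both lemmas --- is available throughout and no fallback to the $\ell_2$-norm is triggered.
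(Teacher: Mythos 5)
Your proposal is correct and follows essentially the same route as the paper's own proof: the identical four-step chain $f(x_k)-f(x_{k+1})\ge\eta(f(x_k)-m^Q_k(s_k))\ge\frac{\eta\sigma_k}{3}\|s_k\|^3_{M_k}\ge\frac{\eta\sigma_{\min}\beta_{\min}^{3/2}}{6\sqrt{2}}\|s_k\|^3\ge\|g_{k+1}\|^{3/2}/\kappa_{\mbox{s}}(\epsilon_d)$ followed by telescoping against $f(x_0)-f_{\mbox{low}}$. Your added remarks on the termination of the inner loop and the uniform lower bound $\sigma_k\ge\sigma_{\min}$ are sound bookkeeping that the paper leaves implicit.
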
 
 \begin{proof}
Indeed, at each iteration of Algorithm \ref{algo:LS-ARC:s}, one has
\begin{eqnarray*}
 f(x_k) - f(x_k + s_k)  & \ge & \eta (f(x_k) -m^Q_k(s_k)) \\
                                  & \ge & \frac{\eta \sigma_k}{3} \|s_k\|^3_{M_k} \\
                                  & \ge & \frac{\eta \sigma_{\min} \beta_{\min}^{3/2}}{6\sqrt{2}}  \|s_k\|^3 \\
                                  & \ge & \frac{\eta \sigma_{\min} \beta_{\min}^{3/2}}{6\sqrt{2}\left(L +C+ 2\sqrt{2} \sigma_{\max} \beta_{\max}^{3/2} \epsilon_d^{-3} +\zeta \right)^{3/2}} \|g_{k+1}\|^{3/2} \\
                                  & \ge & \frac{\eta \sigma_{\min} \beta_{\min}^{3/2}}{6\sqrt{2}\left(L + C+2\sqrt{2} \sigma_{\max} \beta_{\max}^{3/2} \epsilon_d^{-3}+\zeta \right)^{3/2}} \epsilon^{3/2}, \\
\end{eqnarray*}
by using (\ref{sdcond:arc}), (\ref{decrease:cubic}), (\ref{eq:norms}), (\ref{eq:g_s}), and the fact that $\|g_{k+1}\| \ge \epsilon$ before termination. Thus we deduce for all iterations as long as the stopping criterion does not occur
\begin{eqnarray*}
 f(x_{0}) - f(x_{k+1})  &= & \sum_{j=0}^{k} f(x_{j}) - f(x_{j+1}) \\
                                 &\ge& (k+1) \frac{\eta \sigma_{\min} \sqrt{\beta_{\min}}}{3\sqrt{2}\left(L +C+ 2\sqrt{2} \sigma_{\max} \beta_{\max}^{3/2} \epsilon_d^{-3}+\zeta \right)^{3/2}} \epsilon^{3/2}.
 \end{eqnarray*}
 Hence, the required number of iterations to produce an iterate $x_{\epsilon}$ such that $\|\nabla f(x_{\epsilon})  \| \le \epsilon$ is given as follow
 \begin{eqnarray*}
k+1  &\le  & \frac{6\sqrt{2}\left(L + C+ 2\sqrt{2} \sigma_{\max} \beta_{\max}^{3/2} \epsilon_d^{-3} +\zeta \right)^{3/2}}{\eta \sigma_{\min}\beta_{\min}^{3/2}} \frac{ f(x_{0}) - f_{\mbox{low}}}{\epsilon^{3/2} }
 \end{eqnarray*}
where $f_{\mbox{low}}$ is a lower bound on $f$. Thus the proof is completed.
 \end{proof}
 We note that $\kappa_{\mbox{s}}(\epsilon_d)$ can be large for small values of $\epsilon_d$. Hence, although the displayed worst-case complexity bound is of order $\epsilon^{-3/2}$, the latter can be worse than it appears to be if the value of $\epsilon_d$ is very small (i.e., the chosen direction is almost orthogonal with the gradient). Such result seems coherent regarding the LS algorithm strategy where it is required to have a sufficient descent direction (i.e., an $\epsilon_d$ sufficiently large).
\section{TR Algorithm Using a Specific $M_k$-Norm}
\label{section:4}

Similarly to ARC algorithm, it is possible to render TR algorithm behaves as an LS algorithm using the same scaled norm to define the trust-region neighborhood. 
As a reminder  in a basic TR algorithm \cite{Conn_Gould_Toin_2000}, one computes a trial step $p_k$ by approximately solving
 \begin{eqnarray}
\label{eq:nl_TR_subproblem}
\displaystyle \min_{p \in \real^n} &    m_k^Q(p) &  ~~~\mbox{s. t.}~~~ \|p\|_{M_k}  \le \Delta_k,
\end{eqnarray}
where $\Delta_k >0$ is known as the TR radius. As in ARC algorithms, the scaled norm $\|.\|_{M_k}$ may vary along the iterations and  $M_k$ is an SPD matrix. 

Once the trial step $p_k$ is determined, the objective function is  computed at $x_k + p_k$ and compared with the value predicted by the model at this point. If the model value predicts sufficiently well  the objective function (i.e., the iteration is \textit{successful}), the trial point $x_k + p_k$ will be accepted and the TR radius is eventually expanded (i.e., $\Delta_{k+1}=\tau_2 \Delta_{k}$ with $\tau_2\ge 1$). If the model turns out to predict poorly the objective function (i.e., the iteration is \textit{unsuccessful}), the trial point is rejected and the TR radius is contracted  (i.e., $\Delta_{k+1}=\tau_1 \Delta_{k}$ with $\tau_1 < 1$).  The ratio between the actual reduction and the predicted reduction  for the TR algorithms is defined as in ARC (see (\ref{rho})).
For a given scalar $0 < \eta < 1$, the iteration will be said \textit{successful} if $\rho_k \ge \eta$, and  \textit{unsuccessful} otherwise. Algorithm~\ref{algo:TR} gives a detailed description of a basic TR algorithm.

 \LinesNumberedHidden
\begin{algorithm}[!ht]
\SetAlgoNlRelativeSize{0}
\caption{\bf TR algorithm.}
\label{algo:TR}
\SetAlgoLined
\KwData{select an initial point $x_0$ and $0< \eta <1$.  Set the initial TR radius $\Delta_0>0$, the constants  $0 \le \tau_1< 1 \le \tau_2$, and $\Delta_{\max}>\Delta_0$.}
\For{$k= 1,  2, \ldots$}{
Compute the step $p_k$ as an approximate solution of (\ref{eq:nl_TR_subproblem}) such that
\begin{eqnarray}
\label{TR:Cauchy_decrease}
m_k^{Q}(p_k) &\le & m_k(p^{\cc}_{k})
\end{eqnarray}
where 
$p^{\cc}_{k} = - \alpha^{\cc}_{k} g_k \text{ and } \alpha^{\cc}_{k} = \displaystyle \arg \min_{0< t\le \frac{\Delta_k}{\|g_k\|_{M_k}}} m_k^Q(-t g_k)$\;
\eIf{$\rho_k \ge \eta$}{Set 
$x_{k+1}= x_k +p_k$ and $ \Delta_{k+1} =\min\{\tau_2 \Delta_k,\Delta_{\max}\}$; }{Set $x_{k+1} = x_k$ and $\Delta_{k+1}= \tau_1 \Delta_k$\;}
}
\end{algorithm}
Note that for the TR subproblem, the solution we are looking for lies either interior to the trust region, that is $\|p_k\|_{M_k} < \Delta_k$, or on the boundary, $\|p_k\|_{M_k} = \Delta_k$.  If the solution is interior, the solution $p_k$ is the unconstrained minimizer of the quadratic model $m_k^Q$. Such scenario can only happen if $m_k^Q$ is convex. In the non convex case a solution lies on the boundary of the trust region, while in the convex case a solution may or may not do so. Consequently in practice, the TR algorithm finds first the unconstrained minimizer of the model $m_k^Q$. If the model is unbounded from below, or if the unconstrained minimizer lies outside the trust region, the minimizer then occurs on the boundary of the trust region.

In this section, we will assume that the approximated solution $s_k^Q$ of the linear system $B_k s=-g_k$ is computed exactly.  
Using similar arguments as for ARC algorithm, one can extend the obtained results when a truncated step is used in the TR algorithm. 
Under Assumption \ref{asm:1}, we will call the Newton-like step associated with the TR subproblem the vector of the following form
 \begin{eqnarray}
\label{Newton_step_TR}
p^{\NT}_k =  \alpha_k^{\NT} s^Q_k,  ~~\mbox{where} ~\alpha^{\NT}_k   = \displaystyle \arg \min_{\alpha \in \mathcal{R}_k} m^Q_k(\alpha s_k^{Q}),
\end{eqnarray} 
where $\mathcal{R}_k =]0, \frac{\Delta_k}{ \|s_k^{Q}\|_{M_k}}]$ if  $g_k^{\top}  s_k^{Q} < 0$ and $\mathcal{R}_k=[ -\frac{\Delta_k}{ \|s_k^{Q}\|_{M_k}},0[ $ otherwise.

Similarly to ARC algorithm, one has the following results:
\begin{theorem} \label{th:1:tr}   Let Assumption \ref{asm:1} hold. 
\begin{enumerate}
\item 
The Newton-like step (\ref{Newton_step_TR}) is of the following form:
  \begin{eqnarray}
 \label{eq:TR_newton-step}
p^{\NT}_k & = & \alpha^{\NT}_k s^Q_k, ~~\mbox{where}~~  \alpha^{\NT}_k =  \min \left\{1, - \sign(g_k^{\top}s_k^Q) \frac{ \Delta }{\| s_k^Q\|_{M_k}}\right\}.
\end{eqnarray}
\item
When it lies on the border of the trust region  $p^{\NT}_k$  is a stationary point of the subproblem (\ref{eq:nl_TR_subproblem})  if and only if  $M_ks_k^Q= \frac{\theta}{g_k^{\top}s_k^Q}g_k$ where $\theta_k=\|s^Q_k\|^2_{M_k}$.
\item  Let $\lambda^{\NT}_k = \frac{g_k^{\top}s_k^Q}{\theta_k} \left(  1 + \sign(g_k^{\top}s_k^Q)  \frac{ \|s^Q_k\|_{M_k} }{\Delta_k} \right) $ and assume that $p^{\NT}_k$ lies on the border of the trust-region. Then, if the matrix $B_k + \lambda^{\NT}_k M_k$ is positive definite, the step $p^{\NT}_k$ will be the unique minimizer of the subproblem (\ref{eq:nl_TR_subproblem}) over the subspace $\mathcal{L}_k$.
\end{enumerate}
\end{theorem}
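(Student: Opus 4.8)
The plan is to prove the three parts of Theorem~\ref{th:1:tr} by closely mirroring the ARC analysis already carried out in Theorems~\ref{th:1}, \ref{th:2}, and \ref{cor:1}, with the cubic regularization replaced by the trust-region constraint. For part~1, I would substitute $p = \alpha s_k^Q$ into the quadratic model and, using $B_k s_k^Q = -g_k$ (the exact case assumed in this section), obtain $m_k^Q(\alpha s_k^Q) - m_k^Q(0) = \alpha g_k^\top s_k^Q - \tfrac{\alpha^2}{2} g_k^\top s_k^Q$, exactly as in (\ref{min_N_ARC_init}) but without the cubic term. This is a one-dimensional convex (when $g_k^\top s_k^Q<0$) or concave parabola in $\alpha$ whose unconstrained stationary point is $\alpha=1$; minimizing over the interval $\mathcal{R}_k$ then gives $\alpha_k^{\NT} = \min\{1, -\sign(g_k^\top s_k^Q)\,\Delta_k/\|s_k^Q\|_{M_k}\}$ after checking the two sign cases separately (when $g_k^\top s_k^Q>0$ the parabola is decreasing on $\mathcal{R}_k=[-\Delta_k/\|s_k^Q\|_{M_k},0[$ so the minimizer is the left endpoint, which has the stated form). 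I should be careful that the ``min with $1$'' is only the correct reduction when the constraint is inactive, and that the displayed formula silently covers both the interior case ($\alpha_k^{\NT}=1$) and the boundary case.

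For part~2, I would argue as in Theorem~\ref{th:2}: assume $p_k^{\NT}$ lies on the trust-region boundary, so $\|p_k^{\NT}\|_{M_k} = \Delta_k$ and $\alpha_k^{\NT} = -\sign(g_k^\top s_k^Q)\,\Delta_k/\|s_k^Q\|_{M_k}$. The first-order stationarity condition for the TR subproblem on the boundary reads $\nabla m_k^Q(p_k^{\NT}) + \lambda_k^{\NT} M_k p_k^{\NT} = 0$ for some Lagrange multiplier $\lambda_k^{\NT}\geq 0$, i.e.\ $g_k + B_k p_k^{\NT} + \lambda_k^{\NT} M_k p_k^{\NT} = 0$. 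Using $B_k s_k^Q = -g_k$ and $p_k^{\NT} = \alpha_k^{\NT} s_k^Q$, this collapses to $(1-\alpha_k^{\NT}) g_k + \lambda_k^{\NT}\alpha_k^{\NT} M_k s_k^Q = 0$, so $M_k s_k^Q$ must be a multiple of $g_k$; computing the multiplier by pairing with $s_k^Q$ (which gives $\theta_k = \|s_k^Q\|_{M_k}^2$) and solving for $\lambda_k^{\NT}$ yields exactly the expression stated in part~3, $\lambda_k^{\NT} = \frac{g_k^\top s_k^Q}{\theta_k}\bigl(1 + \sign(g_k^\top s_k^Q)\,\|s_k^Q\|_{M_k}/\Delta_k\bigr)$. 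Conversely, if $M_k s_k^Q = \frac{\theta_k}{g_k^\top s_k^Q} g_k$, the same computation shows $p_k^{\NT}$ satisfies the stationarity system, establishing the ``if and only if''. I would also note $\lambda_k^{\NT}\geq 0$ here because the $1-\alpha_k^{\NT}$ and $\alpha_k^{\NT}$ factors have compatible signs.

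For part~3, I would invoke the classical characterization of global TR subproblem solutions (e.g.\ \cite[Theorem~7.2.1]{Conn_Gould_Toin_2000} or the analogue of \cite[Theorem 3.1]{Conn_Gould_Toin_2000} used in the proof of Theorem~\ref{cor:1}): a feasible point $p_k^*$ is a global minimizer of (\ref{eq:nl_TR_subproblem}) iff there is $\lambda_k^*\geq 0$ with $(B_k + \lambda_k^* M_k) p_k^* = -g_k$, $B_k + \lambda_k^* M_k$ positive semidefinite, and the complementarity $\lambda_k^*(\Delta_k - \|p_k^*\|_{M_k}) = 0$; moreover positive definiteness of $B_k + \lambda_k^* M_k$ forces uniqueness. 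By part~2, when $p_k^{\NT}$ is on the boundary the pair $(p_k^{\NT}, \lambda_k^{\NT})$ satisfies the equation and complementarity; adding the hypothesis that $B_k + \lambda_k^{\NT} M_k$ is positive definite supplies the remaining two conditions, so $p_k^{\NT}$ is the unique global minimizer. For the subspace version one repeats the argument on the reduced subproblem with $Q_k^\top M_k Q_k$, $Q_k^\top B_k Q_k$, exactly as in Theorem~\ref{cor:1}, then lifts back via $p_k^{\NT} = Q_k z_k^{\NT}$. The main obstacle I anticipate is purely bookkeeping: handling the sign of $g_k^\top s_k^Q$ consistently through the absolute values and the $\sign(\cdot)$ factors so that $\lambda_k^{\NT}$ comes out manifestly nonnegative and the ``lies on the boundary'' hypothesis is used exactly where needed (it is what makes $\alpha_k^{\NT}$ equal the endpoint value rather than $1$, and what activates complementarity); there is no deep difficulty beyond carefully transcribing the cubic-case proofs to the constrained setting.
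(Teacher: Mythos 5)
Your proposal is correct and follows essentially the same route as the paper's proof: the same substitution $p=\alpha s_k^Q$ with $B_k s_k^Q=-g_k$ and case analysis on $\sign(g_k^{\top}s_k^Q)$ for part 1, the same boundary stationarity system collapsed to $(1-\alpha_k^{\NT})g_k+\lambda_k^{\NT}\alpha_k^{\NT}M_k s_k^Q=0$ and paired with $s_k^Q$ for part 2, and the same appeal to the classical global characterization of trust-region minimizers (the paper cites \cite[Theorem 7.4.1]{Conn_Gould_Toin_2000}) for part 3. Your added remarks on the nonnegativity of $\lambda_k^{\NT}$ and on lifting to the subspace via $Q_k$ are consistent with, and slightly more explicit than, what the paper writes.
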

\begin{proof}
1.
To calculate the Newton-like step $p^{\NT}_k $, we first note, for all $\alpha \in \mathcal{R}_k$
 \begin{eqnarray}
 \label{min_N_TR}
m^Q_k(\alpha s^{Q}_k) -  m^Q_k(0) &=&  \alpha g_k^{\top}s^{Q}_k + \frac{\alpha^2}{2} [s^{Q}_k]^{\top} B_k [s^{Q}_k]  \nonumber \\ 
&=&  (g_k^{\top}s^{Q}_k)   \alpha  -  (g_k^{\top}s_k^Q) \frac{\alpha^2}{2}.
\end{eqnarray}

Consider the case where the curvature model along the Newton direction is positive, that is when $g_k^{\top}s_k^Q<0,$ (i.e., $\mathcal{R}_k =]0, \frac{\Delta_k}{ \|s_k^{Q}\|_{M_k}}]$)
and compute the value of the parameter $\alpha$ at which the unique minimizer of (\ref{min_N_TR}) is attained. Let $\alpha^*_k$ denotes this optimal parameter. 
Taking the derivative of (\ref{min_N_TR})  with respect to $\alpha$ and equating the result to zero, one has  $\alpha^*_k  = 1 $.
Two sub-cases may then occur. The first is when this minimizer lies within the trust region (i.e., $\alpha^*_k \|s^{Q}_k\|_{M_k} \le \Delta_k$), then 
 \begin{eqnarray*}
 \label{alpha_tr_N_1_1}
 \alpha^{\NT}_k &=& 1.
\end{eqnarray*}
If  $\alpha^*_k \|s^{Q}_k\|_{M_k} > \Delta_k$, then the line minimizer is outside the trust region and we have that 
 \begin{eqnarray*}
 \label{alpha_tr_N_1_2}
 \alpha^{\NT}_k  & = & \frac{\Delta_k}{\|s^{Q}_k\|_{M_k}}.
\end{eqnarray*}

Finally, we consider the case where the curvature of the model along the Newton-like step is negative, that is, when $g_k^{\top}s_k^Q> 0$. In that case, the minimizer lies on the boundary of the trust region, and thus 
 \begin{eqnarray*}
 \label{alpha_tr_N_2}
 \alpha^{\NT}_k  & = & - \frac{\Delta_k}{\|s^{Q}_k\|_{M_k}}.
\end{eqnarray*}
By combining all cases, one concludes that 
  \begin{eqnarray*}
p^{\NT}_k  & = &  \alpha^{\NT}_k  s^Q_k, ~~\mbox{where}~~   \alpha^{\NT}_k =  \min \left \{1, - \sign(g_k^{\top}s_k^Q) \frac{ \Delta_k }{\| s^Q_k\|_{M_k}}\right\}.
\end{eqnarray*}

2. Suppose that the Newton-like step lies on the border of the trust region, i.e., $  p^{\NT}_k =   \alpha^{\NT}_k  s^Q_k= - \sign(g_k^{\top}s_k^Q) \frac{ \Delta_k }{\| s^Q_k\|_{M_k}}s^Q_k.
$
The latter step is a stationary point of the subproblem (\ref{eq:nl_TR_subproblem}) if and only if 
there exists a Lagrange multiplier $\lambda^{\NT}_k\ge 0$ such that
\begin{eqnarray*}
(B_k+ \lambda^{\NT}_k M_k)p^{\NT}_k  &=  &-g_k.
\end{eqnarray*}
Substituting $p^{\NT}_k= \alpha^{\NT}_k s_k^Q$ in the latter equation, one has
\begin{eqnarray}
\label{eq:tr:1}
\lambda^{\NT}_k M_k s_k^Q  &=  &\left(1 - \frac{1}{\alpha^{\NT}_k}\right)g_k. 
\end{eqnarray}
By multiplying it from left by $(s^Q)^{\top}$, we deduce that 
\begin{eqnarray*} \label{lagrange-multiplier-tr}
\lambda^{\NT}_k &= & \left(1 - \frac{1}{\alpha^{\NT}_k}\right)\frac{g_k^{\top}s_k^Q}{\|s^Q_k\|^2_{M_k}}=\frac{g_k^{\top}s_k^Q}{\theta} \left(  1 + \sign(g_k^{\top}s_k^Q)  \frac{ \|s^Q_k\|_{M_k} }{\Delta_k} \right).\end{eqnarray*} 
By replacing the value of $\lambda^{\NT}_k$ in (\ref{eq:tr:1}), we obtain that $
M_ks_k^Q= \frac{\theta_k}{g_k^{\top}s_k^Q}g_k
$ where $\theta_k =  \| s_k^Q\|_{M_k}^2 >0$.

3. Indeed, when the step $p^{\NT}_k$ lies on the boundary of the trust-region and $M_k s_k^Q= \frac{\theta_k}{g_k^{\top}s_k^Q}g$. Then applying item (1) of Theorem \ref{th:1:tr}, we see that
$$
(B_k + \lambda^{\NT}_k M_k)p^{\NT}_k=-g_k
$$
with $\lambda^{\NT}_k=\frac{g_k^{\top}s_k^Q}{\theta_k} \left(  1 + \sign(g_k^{\top}s_k^Q)  \frac{ \|s^Q_k\|_{M_k} }{\Delta_k} \right)>0$. 
Applying \cite[Theorem 7.4.1]{Conn_Gould_Toin_2000}, we see that if we assume that the matrix $B_k + \lambda^{\NT}_k M_k$ is positive definite, then $p^{\NT}_k$ is the unique  minimizer of the subproblem (\ref{eq:nl_TR_subproblem}). 
\end{proof}

Given an SPD matrix $M_k$ that satisfies the secant equation  $ M_ks^Q_k= \frac{\theta_k}{g_k^{\top}s_k^Q} g_k$,  item (3) of Theorem \ref{th:1:tr} states that the step $p^{\NT}_k$ is the global minimizer of the associated subproblems over the subspace $\mathcal{L}_k$ as far as the matrix  $B_k + \lambda^{\NT}_k M_k$ is SPD. We note that $\lambda^{\NT}_k$ goes to infinity as the trust region radius $\Delta_k$ goes to zero, meaning that the matrix $B_k + \lambda^{\NT}_k M_k$ will be SPD as far as $\Delta_k$ is chosen to be sufficiently small. Since the TR update mechanism allows to shrink the value of $\Delta_k$ (when the iteration is declared as unsuccessful), satisfying the targeted condition will be geared automatically by the TR algorithm. 

Again, when Assumption \ref{asm:1} holds, we note that  \textit{unsuccessful} iterations in the TR Algorithm require only  updating the value of the TR radius $\Delta_k$ and the current step direction is kept unchanged.  For such iterations, as far as there exists a matrix $M_k$ such as $M_ks^Q_k= \frac{\beta_k \|s^Q_k\|^2}{g_k^{\top}s_k^Q}g_k$ where $\beta_k \in ]\beta_{\min}, \beta_{\max}[$ and $\beta_{\max}> \beta_{\min}>0$, the approximate solution of the TR subproblem  is obtained only by updating the step-size $\alpha^{\NT}_k$.  This means that the computational cost of unsuccessful iterations do not requires solving any extra subproblem. We note that during the application of the algorithm, we will take $\theta_k$ of the form $\beta_k \|s^Q_k\|^2_2$, where $\beta_k \in ]\beta_{\min}, \beta_{\max}[$ and $0<\beta_{\min}< \beta_{\max}$. Such choice of the parameter  $\theta_k$ ensures that the proposed $M_k$-norm uniformly equivalent to the $l_2$ one along the iterations (see Theorem \ref{th:equivnm}).

In this setting, TR algorithms behaves as an LS method with a specific backtracking strategy. In fact, at the $k^{\mbox{th}}$ iteration, the step is of the form $p_k= \alpha_k s^Q_k  $ where $s_k^Q $ is the (approximate) solution of the linear system $B_k s=-g_k$. 
 The step length $ \alpha_k>0$ is chosen such as
\begin{eqnarray} \label{sdcond}
\frac{f(x_k) - f(x_k +p_{k} )}{ f(x_k) - m_k^Q(p_{k})} \ge  \eta  & ~~\mbox{and}~& m^Q_k(p_{k}) \le m^Q_k(-\alpha^{\cc}_k g_k).
\end{eqnarray}
The  values of $\alpha_k$ and $\alpha^{\cc}_k$ are computed respectively as follows:
\begin{eqnarray} \label{eq:deltatr:k}
       \alpha_k &=    & \min \left\{1, - \sign(g_k^{\top}s^Q_k) \frac{ \Delta_k }{\beta_k^{1/2}\|s^Q_k\|}\right\} 
\end{eqnarray}
and
  \begin{eqnarray}
 \label{eq:TR_cauchy-step}
  \alpha^{\cc}_k=  
\left\{
    \begin{array}{ll}
        \frac{\Delta_k}{\chi_k^{1/2}\| g_k\|} & \text{ if } g_k^{\top}B_kg_k \le 0 ~ or~ \frac{g_k^{\top}B_kg_k}{\| g_k\|} \ge \frac{\Delta_k}{\chi_k^{1/2}} ,  \\
        \\
     \frac{g_k^{\top}B_kg_k}{\| g_k\|^2}  &  \text{ else}.
    \end{array}
\right.
\end{eqnarray}
where $\chi_k= \beta_k \left(\frac{5}{2}  - \frac{3}{2} \cos(\varpi_k)^2 + 2 \left(\frac{1- \cos(\varpi_k)^2}{\cos(\varpi_k)}\right)^2\right)$ and $\cos(\varpi_k) = \frac{g_k^{\top}s_k^Q}{\|g_k\| \| s_k^Q\|}$. $\Delta_k$ is initially equals to the current value of the TR radius (as in the original TR algorithm). For large values of $\alpha_k$ the sufficient decrease condition (\ref{sdcond}) may not be satisfied, in this case, the value of $\Delta_k$ is  contracted using the factor $\tau_1$. Iteratively, the value of $\alpha_k$ is updated and the acceptance condition (\ref{sdcond}) is checked again until its satisfaction. 
Algorithm~\ref{algo:LS-TR} details the adaptation of the classical TR algorithm when our proposed $M_k$-norm is used. We denote the final algorithm by LS-TR as it behaves as an LS algorithm.

 \LinesNumberedHidden
\begin{algorithm}[!ht]
\SetAlgoNlRelativeSize{0}
\caption{\bf LS-TR algorithm.}
\label{algo:LS-TR}
\SetAlgoLined
\KwData{select an initial point $x_0$ and the constants $0< \eta <1$, $0<\epsilon_d\le 1$, $0 \le \tau_1< 1 \le \tau_2$, and $0< \beta_{\min}< \beta_{\max}$.  Set the initial TR radius $\Delta_0>0$ and $\Delta_{\max}>\Delta_0$. }
\For{$k= 1,  2, \ldots$}{

Choose a parameter $\beta_k \in ]\beta_{\min}, \beta_{\max}[$\; 
Let $s^Q_k$ be an approximate stationary point of $m_k^Q$ satisfying  $|g_k^\top s^Q_k| \ge  \epsilon_d  \|g_k\|\|s^Q_k\|$\;
Set $\alpha_k$ and $\alpha_k^{\cc}$ using (\ref{eq:deltatr:k}) and (\ref{eq:TR_cauchy-step})\;
 \While{ {\normalfont condition (\ref{sdcond}) is not satisfied} }{
Set $\Delta_k \leftarrow  \tau_1 \Delta_k$, and update $\alpha_k$ and $\alpha_k^{\cc}$ using (\ref{eq:deltatr:k}) and (\ref{eq:TR_cauchy-step})\;
    }   
 Set $p_k= \alpha_k s^Q_k$, $x_{k+1}= x_k + p_k$ and  $ \Delta_{k+1} =\min\{\tau_2 \Delta_k,\Delta_{\max}\}$;
}
\end{algorithm}

As far as the objective function $f$ is continuously differentiable, its gradient is Lipschitz continuous, and its approximated Hessian $B_k$ is bounded for all iterations, the TR algorithm is globally convergent and will required a number of iterations of order $\epsilon^{-2}$ to produce a point $x_{\epsilon}$  with $\|\nabla f(x_{\epsilon})\| \le \epsilon$ \cite{SGratton_ASartenaer_PhLToint_2008}. 

We note that the  satisfaction of Assumption \ref{asm:1} is not problematic. As suggested for LS-ARC algorithm, one can modify the matrix $B_k$ using regularization techniques (as far as the Hessian approximation is kept uniformly bounded from above all over the iterations, the global convergence and the complexity bounds will still hold \cite{Conn_Gould_Toin_2000}).

\section{Numerical Experiments}
\label{section:5}

In this section we report the results of  experiments performed in order to assess the efficiency and
the robustness of the proposed algorithms (\texttt{LS-ARC} and \texttt{LS-TR}) compared with  the classical LS algorithm using the standard Armijo rule. 
In the latter approach, the trial step is of the form $s_k= \delta_k d_k $  where $d_k=s_k^Q$ if $-g_k^{\top}s^Q_k \ge \epsilon_d \|g_k\| \| s^Q_k\|$ ($s^Q_k$ is being an approximate stationary point of  $m_k^Q$) otherwise $d_k=-g_k$, and the step length $\delta_k>0$ is chosen such as
\begin{eqnarray} \label{cond:armijo}
f(x_k +s_k) \le f(x_k) + \eta s_k^{\top}g_k,
\end{eqnarray}
where $\eta \in ]0,1[$. The appropriate value of $\delta_k$ is estimated using a backtracking approach with a contraction factor set to $\tau \in]0,1[$ and where the step length is initially chosen to be $1$.  This LS method will be called \texttt{LS-ARMIJO}.
We implement  all the the algorithms as Matlab m-files and for all the tested algorithms  $B_k$ is set to the true Hessian $\nabla^2f(x_k)$, and $\epsilon_d=10^{-3}$ for both \texttt{LS-ARC} and \texttt{LS-ARMIJO}. 
Other numerical experiments (not reported here) with different values of $\epsilon_d$ (for instance $\epsilon_d = 10^{-1}$, $10^{-6}$, and $10^{-12}$) lead to almost the same results.

By way of comparison, we have also implemented the standard ARC/TR algorithms (see Algorithms \ref{algo:ARC} and \ref{algo:TR}) using the Lanczos-based solver \texttt{GLTR}/\texttt{GLRT} implemented in \texttt{GALAHAD} \cite{NIMGould_DOrban_PhLToint_2003}.The two subproblem solvers, \texttt{GLTR/GLRT} are implemented in Fortran and interfaced with Matlab using the default parameters. For the subproblem formulation we used the $\ell_2$-norm (i.e., for all iterations the matrix $M_k$ is set to identity). We shall refer to the ARC/TR methods based on \texttt{GLRT/GLTR}  as \texttt{GLRT-ARC/GLTR-TR}. 

The other parameters defining the implemented algorithms are set as follows, for  \texttt{GLRT-ARC} and \texttt{LS-ARC}
$$
\eta=0.1,  ~\nu_1=0.5, ~\nu_2=2,~\sigma_0=1, ~\mbox{and } \sigma_{\min}=10^{-16};   
$$
 for  \texttt{GLTR-TR} and \texttt{LS-TR}
$$
\eta=0.1, ~\tau_1= 0.5 , ~\tau_2=2, ~\Delta_0=1,~ \mbox{and } \Delta_{\max}=10^{16};   
$$
and last for \texttt{LS-ARMIJO}
$$
\eta=0.1, ~\mbox{and }  ~\tau= 0.5. \mbox{ } 
$$
 In all algorithms the maximum number of iterations is set to $10000$ and the algorithms stop when
$$
\|g_k\|\le 10^{-5}.
$$
 A crucial ingredient in \texttt{LS-ARC} and \texttt{LS-TR} is the management of the parameter $\beta_k$. A possible choice for $\beta_k$ is $|g_k^{\top} s_k^Q|/\|s^Q_k\|^2$. This choice is inspired from the fact that, when the Hessian matrix $B_k$ is SPD, this update corresponds to use the energy norm, meaning that the matrix $M_k$ is set equals to $B_k$
 (see \cite{Bergou_Diouane_Gratton_2017} for more details). However, this choice did not lead to good performance of the algorithms \texttt{LS-ARC} and \texttt{LS-TR}. In our implementation, for the \texttt{LS-ARC} algorithm, we set the value of $\beta_k$ as follows: $\beta_k=10^{-4} \sigma_k^{-2/3}$ if $g_k^{\top}s_k^Q < 0$ and $2$ otherwise. By this choice, we are willing to allow  \texttt{LS-ARC}, at the start of the backtracking procedure, to take approximately the Newton step. Similarly, for the \texttt{LS-TR} method, we set $\beta_k=1$ and this  allows \texttt{LS-TR}  to use the Newton step at the start of the backtracking strategy (as in \texttt{LS-ARMIJO} method). 

All the Algorithms are evaluated on a set of unconstrained optimization problems from the CUTEst collection \cite{Gould2015}. 
The test set contains $62$ large-scale ($1000 \le n \le 10000$) CUTest problems with their default parameters. 
Regarding the algorithms \texttt{LS-TR}, \texttt{LS-ARC}, and \texttt{LS-ARMIJO},  we approximate the solution of the linear system $B_ks=-g_k$ using the \texttt{MINRES} Matlab solver. The latter method is a Krylov subspace method designed to solve symmetric linear systems \cite{Paige_1975}. We run the algorithms with the  \texttt{MINRES} default  parameters except the relative tolerance error which is set  to  $10^{-4}$.  
We note that on the tested problems,  for \texttt{LS-ARC/LS-TR} , Assumption \ref{asm:1} was not violated frequently. The restoration of this assumption was ensured by performing iterations of \texttt{GLRT-ARC/GLTR-TR}  (with the $\ell_2$-norm) until a new successful iteration is found.

To compare the performance of the algorithms we use performance profiles proposed by Dolan and Mor\'e~\cite{EDDolan_JJMore_2002} over a variety of problems.  Given a set of problems $\mathcal{P}$ (of cardinality $|\mathcal{P}|$)
and a set of solvers $\mathcal{S}$, the performance profile
$\rho_s(\tau)$ of a solver~$s$ is defined as the fraction of problems
where the performance ratio $r_{p,s}$ is at most $\tau$
\begin{eqnarray*}
 \rho_s(\tau) \; = \; \frac{1}{|\mathcal{P}|} \mbox{size} \{ p \in \mathcal{P}: r_{p,s} \leq \tau \}.
\end{eqnarray*}
The performance ratio $r_{p,s}$ is in turn defined by
\[
r_{p,s} \; = \; \frac{t_{p,s} }{\min\{t_{p,s}: s \in \mathcal{S}\}},
\]
where $t_{p,s} > 0$ measures the performance of the solver~$s$ when solving problem~$p$
(seen here as the function evaluation, the gradient evaluation, and the CPU time).
Better performance of the solver~$s$,
relatively to the other solvers on the set of problems,
is indicated by higher values of $\rho_s(\tau)$.
In particular, efficiency is measured by $\rho_s(1)$ (the fraction of problems for which solver~$s$ performs
the best) and robustness is measured by $\rho_s(\tau)$ for $\tau$ sufficiently large
(the fraction of problems solved by~$s$). Following what is suggested in~\cite{EDDolan_JJMore_2002} for a better visualization,
we will plot the performance profiles in a $\log_2$-scale
(for which $\tau=1$ will correspond to $\tau=0$).

\begin{figure}[!ht]
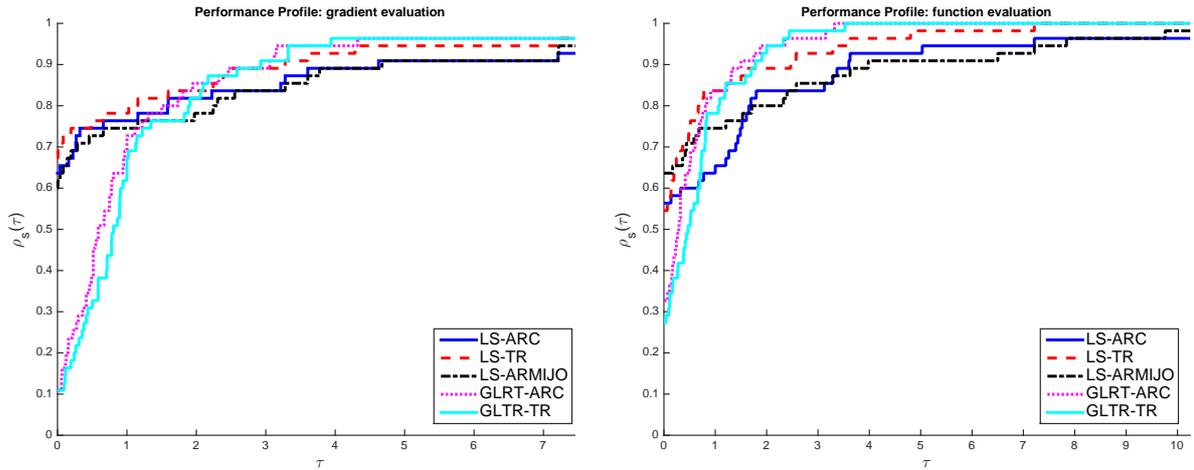
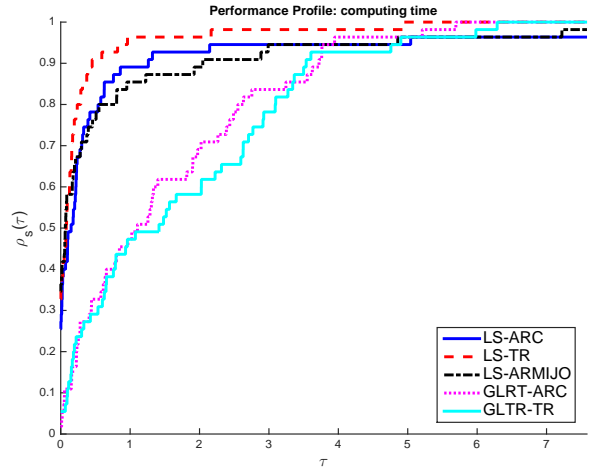

\centering
\subfigure[Gradient evaluation.]{
\includegraphics[scale=0.44]{./large_pb_perf_profiles_ic.eps}\label{subfig2:pp:ls}
}
\subfigure[Function evaluation.]{
\includegraphics[scale=0.44]{./large_pb_perf_profiles_fe.eps} \label{subfig1:pp:ls}
}
\subfigure[CPU time.]{
\includegraphics[scale=0.44]{./large_pb_perf_profiles_ct.eps} \label{subfig3:pp:ls}
}
\caption{Performance profiles for $62$ large scale optimization problems  (i.e., $1000 \le n \le 10000$).} \label{fig:pp:ls}
\end{figure}

We present the obtained performance profiles in Figure~\ref{fig:pp:ls}.
Regarding the gradient evaluation (i.e., outer iteration) performance profile, see Figure \ref{subfig2:pp:ls}, LS approaches are the most efficient among all the tested solvers (in more $60 \%$ of the tested problems LS methods  perform the best, while \texttt{GLRT-ARC}  and \texttt{GLTR-TR} are performing better only in less than $15 \%$). When it comes to robustness, all the tested approaches exhibit good performance, \texttt{GLRT-ARC} and \texttt{GLTR-TR} are  slightly better.

For function evaluation performance profile given by Figure~\ref{subfig1:pp:ls}, \texttt{GLRT-ARC} and \texttt{GLTR-TR} show a better efficiency but not as good as LS methods. In fact, in more than $50\%$ of the tested problems LS methods perform the best while \texttt{GLRT-ARC} and \texttt{GLTR-TR} are better only in less than $35\%$. 
The robustness of the tested algorithms is the same as in the gradient evaluation performance profile. 

In terms of the demanded computing time, see Figure~\ref{subfig3:pp:ls}, as one can expect, \texttt{GLRT-ARC} and \texttt{GLTR-TR} are turned to be very consuming compared to the LS approaches. In fact, unlike the LS methods where only an approximate solution of one linear system  is needed, the \texttt{GLRT}/\texttt{GLTR} approaches may require (approximately) solving multiple linear systems in sequence. 

For the LS approaches, one can see that \texttt{LS-TR} displays better performance compared to  \texttt{LS-ARMIJO} on the tested problems. The main difference between the two LS algorithms is the strategy of choosing the search direction whenever $g_k^{\top}s_k^Q>0$. In our tested problems, the obtained performance using \texttt{LS-TR} suggests that going exactly in the opposite direction $-s_k^Q$, whenever $s_k^Q$ is not a descent direction, can be seen as a good strategy compared to \texttt{LS-ARMIJO}.
\section{Conclusions}
\label{section:6}

In this paper, we have proposed the use of a specific norm in ARC/TR. 
With this norm choice, we have shown that the trial step of ARC/TR is getting collinear to the quasi-Newton direction. The obtained ARC/TR algorithm behaves as LS algorithms with a specific backtracking strategy.  Under mild assumptions, the proposed scaled norm was shown to be uniformly equivalent to the Euclidean norm. In this case, the obtained LS algorithms enjoy the same convergence and complexity properties as ARC/TR. We have also proposed a  second order version of the LS algorithm derived from ARC with an optimal worst-case complexity bound of order $\epsilon^{-3/2}$. 
Our numerical experiments showed encouraging performance of the proposed LS algorithms. 

A number of issues need further investigation, in particular the best choice and the impact of the parameter $\beta_k$ on the performance of the proposed LS approaches. Also, the analysis of the second order version of ARC  suggests that taking the Newton direction  is suitable for defining a line-search method with an optimal worst-case complexity bound of order $\epsilon^{-3/2}$. It would be interesting to confirm the potential of the proposed line search strategy compared to the classical LS approaches using extensive numerical tests.

   \small
\bibliographystyle{plain}
\bibliography{ref-tr-arc}

\begin{thebibliography}{10}

\bibitem{Bergou_Diouane_Gratton_2017}
E.~Bergou, Y.~Diouane, and S.~Gratton.
\newblock On the use of the energy norm in trust-region and adaptive cubic
  regularization subproblems.
\newblock {\em Comput. Optim. Appl.}, 68(3):533--554, 2017.

\bibitem{Birgin2016}
E.~G. Birgin, J.~L. Gardenghi, J.~M. Mart{\'i}nez, S.~A. Santos, and Ph.~L.
  Toint.
\newblock Worst-case evaluation complexity for unconstrained nonlinear
  optimization using high-order regularized models.
\newblock {\em Math. Program.}, 163(1):359--368, 2017.

\bibitem{Birgin2017}
E.~G. Birgin and J.~M. Mart{\'i}nez.
\newblock The use of quadratic regularization with a cubic descent condition
  for unconstrained optimization.
\newblock {\em SIAM J. Optim.}, 27(2):1049--1074, 2017.

\bibitem{CCartis_NIMGould_PhLToint_2011_b}
C.~Cartis, N.~I.~M. Gould, and Ph.~L. Toint.
\newblock {Adaptive cubic overestimation methods for unconstrained
  optimization. {Part II}: worst-case function- and derivative-evaluation
  complexity}.
\newblock {\em Math. Program.}, 130(2):295--319, 2011.

\bibitem{CCartis_NIMGould_PhLToint_2011_a}
C.~Cartis, N.~I.~M. Gould, and Ph.~L. Toint.
\newblock {Adaptive cubic regularisation methods for unconstrained
  optimization. {Part I}: Motivation, convergence and numerical results}.
\newblock {\em Math. Program.}, 127(2):245--295, 2011.

\bibitem{CCartis_NIMGould_PhLToint_2017}
C.~Cartis, N.~I.~M. Gould, and Ph.~L. Toint.
\newblock {Optimality of orders one to three and beyond: characterization and
  evaluation complexity in constrained nonconvex optimization}.
\newblock Technical report, 2017.

\bibitem{CCartis_PhLSampaio_PhLToint_2015}
C.~Cartis, Ph.~R. Sampaio, and Ph.~L. Toint.
\newblock Worst-case evaluation complexity of non-monotone gradient-related
  algorithms for unconstrained optimization.
\newblock {\em Optimization}, 64(5):1349--1361, 2015.

\bibitem{Conn_Gould_Toin_2000}
A.~R. Conn, N.~I.~M. Gould, and {\relax Ph}.~L. Toint.
\newblock {\em Trust-Region Methods}.
\newblock SIAM, Philadelphia, PA, USA, 2000.

\bibitem{CurtRobiSama16}
F.~E. Curtis, D.~P. Robinson, and M.~Samadi.
\newblock {A trust region algorithm with a worst-case iteration complexity of
  $O(\epsilon^{-3/2})$ for nonconvex optimization}.
\newblock {\em Math. Program.}, 162(1):1--32, 2017.

\bibitem{JEDennis_RBSchnabel_1983}
J.~E. Dennis and R.~B. Schnabel.
\newblock {\em Numerical methods for unconstrained optimization and nonlinear
  equations}.
\newblock Prentice-Hall Inc, Englewood Cliffs, NJ, 1983.

\bibitem{EDDolan_JJMore_2002}
E.~D. Dolan and J.~J. Mor{\'e}.
\newblock Benchmarking optimization software with performance profiles.
\newblock {\em Math. Program.}, 91(2):201--213, 2002.

\bibitem{NIMGould_DOrban_PhLToint_2003}
N.~I.~M. Gould, D.~Orban, and Ph.~L. Toint.
\newblock {GALAHAD}, a library of thread-safe fortran 90 packages for
  large-scale nonlinear optimization.
\newblock {\em ACM Trans. Math. Softw.}, 29(4):353--372, 2003.

\bibitem{Gould2015}
N.~I.~M. Gould, D.~Orban, and Ph.~L. Toint.
\newblock {CUTEst}: a constrained and unconstrained testing environment with
  safe threads for mathematical optimization.
\newblock {\em Comput. Optim. Appl.}, 60(3):545--557, 2015.

\bibitem{SGratton_ASartenaer_PhLToint_2008}
S.~Gratton, A.~Sartenaer, and Ph.~L. Toint.
\newblock Recursive trust-region methods for multiscale nonlinear optimization.
\newblock {\em SIAM J. Optim.}, 19(1):414--444, 2008.

\bibitem{AGriewank_1981}
A.~Griewank.
\newblock {The modification of Newton's method for unconstrained optimization
  by bounding cubic terms}.
\newblock Technical Report NA/12, Department of Applied Mathematics and
  Theoretical Physics, University of Cambridge, United Kingdom, 1981.

\bibitem{Martinez2017}
J.~M. Mart{\'i}nez and M.~Raydan.
\newblock Cubic-regularization counterpart of a variable-norm trust-region
  method for unconstrained minimization.
\newblock {\em J. Global Optim.}, 68(2):367--385, 2017.

\bibitem{YNesterov_2004}
Y.~Nesterov.
\newblock {\em Introductory Lectures on Convex Optimization}.
\newblock Kluwer Academic Publishers, 2004.

\bibitem{YNesterov_BTPolyak_2006}
Y.~Nesterov and B.~T. Polyak.
\newblock {Cubic regularization of Newton's method and its global performance}.
\newblock {\em Math. Program.}, 108(1):177--205, 2006.

\bibitem{Paige_1975}
C.~C. Paige and M.~A. Saunders.
\newblock Solution of sparse indefinite systems of linear equations.
\newblock {\em SIAM J. Numer. Anal.}, 12(4):617--629, 1975.

\bibitem{YYuan_2015}
Y.~Yuan.
\newblock Recent advances in trust region algorithms.
\newblock {\em Math. Program.}, 151(1):249--281, 2015.

\end{thebibliography}
\end{document}